\date{Julu 2022}
\titleformat{\subsubsection}[runin]
	{\normalfont\normalsize\bfseries\filcenter}{\thesubsubsection.}{1 ex}{}
\newcommand*{\doi}[1]{doi: \href{https://dx.doi.org/#1}{\urlstyle{rm}\nolinkurl{#1}}}
\newcommand*{\arxiv}[1]{arXiv:  \href{https://arxiv.org/abs/#1}{\urlstyle{rm}\nolinkurl{#1}}}
\declaretheorem[within=section]{theorem}
\declaretheorem[sibling=theorem]{lemma}
\declaretheorem[sibling=theorem]{corollary}
\declaretheorem[sibling=theorem]{conjecture}
\newcommand\RR{\mathbb{R}}
\newcommand\PP{\mathbb{P}}
\newcommand{\defn}[1]{\emph{\color{blue} #1}} 
\newcommand\p{{\bf p}}
\newcommand\rr{{\bf r}}
\newcommand\q{{\bf q}}
\title{Universal rigidity of ladders on the line}
\author{Bryan Chen\thanks{Partially supported by NSF grant PHY-1554887}, Robert Connelly\thanks{Department of Mathematics, Cornell University. \texttt{rc46@cornell.edu}. Partially supported by NSF grant DMS-1564493}, Steven J. Gortler\thanks{School of Engineering and Applied Sciences, Harvard University, Cambridge, USA. Partially supported by NSF Grant:XXX and the Aalto Science Institute (AScI) Thematic Program 
``Challenges in large geometric structures and big data''.}, Anthony Nixon\thanks{Department of Mathematics and Statistics, Lancaster University. \texttt{a.nixon@lancaster.ac.uk}.}, and Louis Theran\thanks{School of Mathematics and Statistics, University of St Andrews. \texttt{lst6@st-and.ac.uk}}}
\begin{document}
\date{July 2022}
\maketitle

\begin{abstract}
In \cite{Line-rigidity} it is shown, answering a question of Jord\'an and Nguyen \cite{JN15}, that universal rigidity of a generic bar-joint framework in $\RR^1$ depends on more than the ordering of the vertices.   The graph $G$ that was used in that paper is a ladder with three rungs.  Here we provide a general answer when that ladder with three rungs in the line is universally rigid and when it is not. 

\end{abstract}

\section{Introduction}

A framework is a structure composed of stiff bars linked at universal joints.
Here we investigate the universal rigidity of a particular framework, called a ladder, in the line $\RR^1$.  We give a complete description of when it is rigid in all higher dimensions (one definition of universal rigidity) and when it is not rigid in higher dimensions.  This was motivated by a question posed by Tibor Jord\'an and V.-H. Nguyen in \cite{JN15} as to whether universal rigidity in $\RR^1$ only depends on the order of its vertices.  In \cite{Line-rigidity} it is shown that the ladder at two particular configurations in the line is such that the vertices are in the same order, but one is universally rigid and the other is not universally rigid. Here we give a complete description of when any configuration of the ladder in the line, with no zero length edges, is universally rigid.  It is universally rigid when a particular polynomial, that we call the discriminant, is positive in the $6$ coordinates of the ladder, and it is flexible in higher dimensions when the discriminant is negative. When the discriminant is zero, it can be either universally rigid or not.  The argument here uses techniques that can be applied in a much larger context to show both universal and non-universal rigidity.  In particular, our argument builds on results in \cite{iterative} that give a general condition when a framework in any dimension is universally rigid.

\section{Definitions and set-up}\label{sect:definitions}

A \defn{framework} $(G,\p)$ is an ordered pair consisting of a graph $G$ with $n$ vertices and a \defn{configuration} $\p = (p_1, \ldots, p_n)$, with each $p_i \in \RR^d$ corresponding to vertex $i$ in the graph $G$. We say that $(G,\p)$ is \defn{universally rigid} in $\RR^d$ if any other framework $(G,\q)$ with corresponding edge lengths the same as in $(G,\p)$ and $\q$ a configuration in $\RR^D$, for some $d\le D$, is such that $\q$ is congruent to $\p$.

An \defn{equilibrium stress} or simply a \defn{stress} is a scalar $\omega_{ij}$ assigned to each edge $ij$ corresponding to each pair of  vertices  of $G$ such that:
\begin{equation}\label{eq: stress}
    \sum_{j\neq i} \omega_{ij} (p_i-p_j)=0.
\end{equation}
Each non-edge of $G$ has $\omega_{ij}=0$. To each such stress we define a \defn{stress matrix} $\Omega=\Omega(\omega)$ whose $i,j$ entry, for $i \ne j$, is $-\omega_{ij}$ and whose diagonal entries are such that the entries in each row, and in each column, sum to $0$.  

We say $(G,\p)$ is \defn{super stable} in $\mathbb{R}^d$ if there exists an equilibrium stress $\omega$ for $(G,\p)$ 
such that $\Omega(\omega)$ is positive semi-definite (PSD) of rank $n-d-1$, where $n$ is the number of vertices of $G$, and the edge directions of $(G,\p)$ do not lie on a conic at infinity. A framework $(G,\p)$ in $\RR^d$ is defined to lie on a \defn{conic at infinity} if there is  a non-zero $d$-by-$d$ symmetric matrix $Q$ such that in $G$ for each edge $i,j$ between vertex $i$ and vertex $j$, $(p_i-p_j)^tQ(p_i-p_j)=0$.  Note that $Q$ is necessarily not PSD, and for $d=2$, the set of such directions is just two directions, that is just two points on the (projective) line at infinity.

The following \cite{C83} is fundamental. 

\begin{theorem}\label{thm:super}
Let $(G,\p)$ be a super-stable framework in $\mathbb{R}^d$. Then $(G,\p)$ is universally rigid in $\mathbb{R}^d$.
\end{theorem}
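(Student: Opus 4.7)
The plan is to take an arbitrary framework $(G,\q)$ in some $\RR^D$ with $D\ge d$ sharing the edge lengths of $(G,\p)$ and to show that $\q$ is congruent to $\p$. I would argue in two stages: first, use the PSD stress matrix $\Omega$ to force $\q$ to be an affine image of $\p$; then, use the no-conic-at-infinity hypothesis to upgrade that affine image to a rigid congruence.

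For the affine image step, I would introduce the quadratic energy $E(\x):=\sum_{ij}\omega_{ij}|x_i-x_j|^2$ associated to the stress $\omega$. A short computation yields $E(\x)=2\,\trace(X^\top\Omega X)$, where $X$ is the matrix whose rows are the vectors $x_i$. The equilibrium stress equation \eqref{eq: stress} is exactly the statement $\Omega P=0$, and combined with $\Omega\vecone=0$ (from the row-sum definition of $\Omega$) this gives $E(\p)=0$. Since $\omega_{ij}=0$ on non-edges and edge lengths agree between $(G,\p)$ and $(G,\q)$, the energy does not see the passage from $\p$ to $\q$, so $E(\q)=E(\p)=0$. Positive semidefiniteness now does the heavy lifting: the vanishing of $\trace(Q^\top\Omega Q)$ together with $\Omega$ being PSD forces $\Omega Q=0$, so every coordinate of $\q$ lies in $\kernel\Omega$. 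The rank hypothesis $\rank\Omega=n-d-1$ then pins down $\kernel\Omega$ as exactly the $(d+1)$-dimensional span of $\vecone$ together with the coordinate functions of $\p$, whence $q_i=Ap_i+b$ for some $D\times d$ matrix $A$ and vector $b\in\RR^D$.

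For the congruence step, edge-length preservation translates into $(p_i-p_j)^\top(A^\top A-I)(p_i-p_j)=0$ for every edge $ij$. The symmetric $d\times d$ matrix $A^\top A-I$ would then define a conic at infinity containing every edge direction, violating the hypothesis unless $A^\top A=I$. Therefore $A$ is an isometric embedding of $\RR^d$ into $\RR^D$ and $\q$ is congruent to $\p$, establishing universal rigidity.

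The main obstacle is the PSD step: it is what converts the numerical equality $E(\q)=0$ into the strong structural statement that every coordinate of $\q$ lies in $\kernel\Omega$. Without positive semidefiniteness one would only learn that $\q$ is a critical point of $E$, which is far too weak to force an affine relationship with $\p$. The conic-at-infinity condition, by contrast, plays a cleaner auxiliary role at the end, killing the only remaining affine freedom.
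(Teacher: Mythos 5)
Your argument is correct, and it is essentially the classical proof of this theorem: the paper itself offers no proof, citing Connelly \cite{C83}, and your energy-function argument (stress energy vanishes, PSD-ness forces the coordinates of $\q$ into $\kernel\Omega$, the rank condition makes $\q$ an affine image of $\p$, and the conic-at-infinity hypothesis kills the residual affine freedom) is exactly the standard route taken there. One small point you leave implicit: identifying $\kernel\Omega$ with the span of $\vecone$ and the coordinate vectors of $\p$ requires those $d+1$ vectors to be linearly independent, i.e.\ that $\p$ affinely spans $\RR^d$; this is not an extra assumption, since if $\p$ lay in an affine hyperplane with normal $v$ then $Q=vv^{\top}$ would be a nonzero symmetric matrix with $(p_i-p_j)^{\top}Q(p_i-p_j)=0$ on every edge, so the edge directions would lie on a conic at infinity, contradicting super stability --- but the step deserves a sentence, since without it the rank hypothesis alone does not pin down the kernel. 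With that remark added, your proof is complete and matches the cited source's approach.
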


In the generic case the converse is also true (when the number of vertices is at least $d+2$) by Gortler and Thurston in \cite{GT14}. This equivalence breaks down without the genericity hypothesis (for example, see Figure \ref{Projections.fig}). The desire to understand the non-generic case motivates us to study the boundary between universally rigid and non-universally rigid frameworks.

For most frameworks $(G,\p)$, 
we have observed that when it is universally rigid, it is the limit of super stable frameworks $(G,\q)$.  One simple exception is in the line when $G$ consists of just one bar, and in the plane when the vertices of $\p$ lie in two lines and has a bar connecting a vertex in one line to a vertex in the other line.  The stress is $0$ on that bar, and the bar is needed for universal rigidity.

A natural situation is to extend the theory of bar frameworks to tensegrities, where each member, corresponding to an edge of the graph $G$, is either a strut (constrained not to get shorter) or a cable (constrained not to get longer).  A bar is both a strut and a cable.  With this in mind we propose the following: 

\begin{conjecture} \label{Question} If a tensegrity $(G,\p)$  in $\RR^d$, with each member either a strut or a cable, is universally rigid, then it is the limit of super stable tensegrities $(G,\q)$.  
\end{conjecture}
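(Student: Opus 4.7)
The plan is to approximate a universally rigid tensegrity by super stable ones through controlled perturbations of the configuration, working inside the \emph{proper stress cone}. For bar-joint frameworks with $n \geq d+2$, the Gortler--Thurston equivalence of generic universal rigidity and super stability already covers the generic case, so the real difficulty is to handle non-generic $\p$ together with the sign constraints that cables and struts impose. First I would set up the cone $\cS(G,\p)$ of equilibrium stresses satisfying $\omega_{ij} \geq 0$ on cables and $\omega_{ij} \leq 0$ on struts, and observe that universal rigidity forces the existence of some proper stress whose matrix $\Omega$ is PSD and supports the tensegrity in the sense of \cite{iterative}; but a priori $\Omega$ need neither attain the maximal rank $n-d-1$ nor have edge directions avoiding a conic at infinity.

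The main step would be to run the iterative construction of \cite{iterative} layer by layer. Starting from an innermost essential subframework, possibly of lower affine dimension as in the exceptional examples mentioned in the excerpt, I would perturb the remaining vertices one at a time into generic position inside the configuration space. At each stage I transfer the proper PSD witness to a nearby stress with exactly the same sign pattern on cables and struts, using the fact that strictly signed coefficients survive small perturbations. A transversality argument on the linear map from configurations to PSD stresses of prescribed support then promotes the witness, after finitely many perturbations, to one of rank exactly $n-d-1$ whose edge directions leave every conic at infinity. The resulting sequence $(G,\q_k)$ would be super stable and converge to $(G,\p)$.

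The hard part, and what I expect to be the main obstacle, is precisely the presence of edges forced to carry zero stress in every proper witness, as in the ``bar connecting vertices on two parallel lines'' exception noted before the conjecture. Such edges are essential for universal rigidity but invisible to the stress analysis, so a careless perturbation can destroy the structure they support. Dealing with them calls for stratifying configuration space by the combinatorial type of the stress support together with the affine span of the essential bars, and proving the conjecture by induction on the codimension of the stratum containing $(G,\p)$. The inductive step must simultaneously preserve universal rigidity and either enlarge the support of the proper stress or reduce the dimension of the essential subframework, and verifying that such a transverse perturbation always exists, respecting the sign pattern, is where the real work lies.
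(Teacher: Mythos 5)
This statement is a \emph{conjecture} in the paper, not a theorem: the authors explicitly state they have no counterexamples and they resolve it only for the three-rung ladder in $\RR^1$ (via the discriminant analysis, Theorem \ref{discriminant:thm}, Theorem \ref{thm:ladder-nonUR} and Theorem \ref{cor:wrap-up}), so there is no ``paper proof'' in the general setting for your argument to be measured against. Judged on its own terms, your proposal is a program rather than a proof, and the gap is exactly where you say ``the real work lies'': you never establish that a proper PSD witness at a nearby configuration can be promoted to rank $n-d-1$ with edge directions off every conic at infinity while keeping the cable/strut signs. That promotion step is not a technical afterthought --- it is essentially the conjecture itself restated. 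The orchard ladder already shows the difficulty: it is universally rigid but admits no maximal-rank proper PSD stress at its own configuration, so any argument must move the configuration, and nothing in your transversality sketch shows that the moved configurations acquire such a stress rather than, say, crossing into the $D<0$ regime where (as Theorem \ref{cor:wrap-up} shows for the ladder) universal rigidity is lost entirely. The paper's handling of the one case it does settle is instructive here: it does not perturb generically, but exploits the specific geometry --- lifting to a ladder in $\RR^2$, a sign analysis of the discriminant, and (in Theorem \ref{thm:ladder-nonUR}) a constrained length-minimization producing a PSD stress via Lemma \ref{lem:haspsd}, followed by a limiting argument and Corollary \ref{end-vary}. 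None of that machinery is graph-independent, which is precisely why the general statement remains open.

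Two further specific problems with your set-up. First, you assert that universal rigidity of the tensegrity ``forces the existence of some proper stress whose matrix $\Omega$ is PSD''; what \cite{iterative} provides is an \emph{iterated} sequence of PSD stresses (facial reduction), not a single proper PSD stress, and collapsing the iteration into one witness with the right signs is not justified. Second, the appeal to Gortler--Thurston covers generic \emph{bar} frameworks; for tensegrities the sign constraints are part of the data, and genericity of the configuration does not by itself yield a proper (correctly signed) maximal-rank stress. So the proposal as written has a genuine gap at its central step, and the remaining outline does not reduce the conjecture to anything already known.
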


We have no counterexamples to this conjecture.

In the following we take a closer look at a particular graph, the ladder, defined in the next section, and answer Conjecture \ref{Question} just for that graph.  All universally rigid frameworks of the ladder in the line are limits of super stable frameworks. 

\section{ The Ladder}\label{Ladder}

We now take a closer look at a particular framework in the plane that was central in the examples of \cite{Line-rigidity}, the Dessargues' configuration, and the three-rung ladder described in Figure \ref{ladders.fig}, which is especially relevant when they appear in the line. In all these we assume that all the struts, cables, and bars, have non-zero length.
\begin{figure}[ht]
\centering 
\includegraphics[scale=0.5]{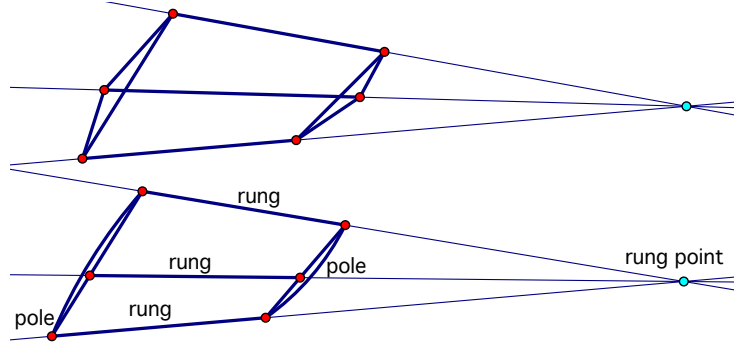}
\captionsetup{labelsep=colon,margin=1.3cm}
\caption{These are two frameworks in $\RR^2$. The top one is the Desargues configuration, where the three lines connecting the corresponding vertices of the two triangles meet at point.  The bottom one is the three-rung ladder. The three lines through the rungs also meet at a point, in each case, possibly at infinity, i.e. parallel. In the poles, we have indicated the bar between its end points with a curved line.}
\label{ladders.fig}
\end{figure}

The \defn{Desargues graph} consists of $6$ points, $A,B,C,a,b,c$, where $A,B,C$ and $a,b,c$ each form a triangle while $A,a$; $B,b$; and $C,c$ are three additional edges.  The \defn{Desargues framework} is a corresponding configuration where we assume that none of the edges have zero length and the three lines determined by the edges $A,a$; $B,b$; and $C,c$ meet at a single point, possibly at infinity in the associated projective plane.  The \defn{ladder} is a framework whose graph is the same as is a Desargues graph, but where  $A,B,C$ lies in a line and $a,b,c$ lies in a line.  Note that when the corresponding configuration lies in $\RR^1$, the Desargues framework is automatically a ladder.

With respect to the Desargues framework in Figure \ref{ladders.fig} we have the following:

\begin{theorem}[{\cite[Theorem 5]{C83}}] \label{thm:con83}
Let $(G,\p)$ be a framework/tensegrity in $\mathbb{R}^2$, so that the vertices of  $\p$   are in strictly convex position
and every edge of the convex hull of  $\p$   is an edge of $G$; we call these  
boundary edges cables and the rest of the  interior edges, struts.
If $\omega$ is an equilibrium stress of $(G,\p)$ that is positive 
on the boundary edges and negative on the interior edges, then 
the associated stress matrix is PSD (positive semi-definite) of rank $n - 3$ and $(G,p)$ is super stable.
\end{theorem}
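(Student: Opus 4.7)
The plan is to verify each condition in the definition of super-stability: (i) the rank bound $\rank \Omega(\omega) \le n-3$; (ii) that $\Omega(\omega)$ is PSD with rank exactly $n-3$; and (iii) the edge directions of $(G,\p)$ do not lie on a conic at infinity of $\RR^2$. Condition (i) is automatic, since every stress matrix has $\vecone$ and each of the two coordinate functions of $\p$ in its kernel, and these three vectors are linearly independent because $\p$ is in strictly convex position; hence $\rank \Omega \le n-3$.

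The main step is (ii), which I would attack via a Maxwell--Cremona-style lifting argument. Writing
\[
v^\top \Omega v \;=\; \sum_{ij\in E}\omega_{ij}(v_i - v_j)^2,
\]
interpret $v$ as heights and form $\tilde p_i := (p_i, v_i) \in \RR^3$, giving a polyhedral lift over $\conv(\p)$. After refining the planar straight-line drawing of $G$ (with positions $\p$) to a triangulation, and extending $\omega$ by zero on the new edges---which changes neither $\Omega$ nor the quadratic form---the sign pattern of $\omega$ (positive on the convex boundary, negative in the interior) matches the Maxwell--Cremona correspondence for a \emph{convex} polyhedral lift of $\conv(\p)$. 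Convexity of the lift yields $v^\top \Omega v \ge 0$, with equality exactly when the lift is affine, i.e.\ when $v$ lies in the span of $\vecone$ together with the two coordinate functions of $\p$. Combined with (i), this gives $\Omega$ PSD of rank exactly $n-3$.

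For (iii), the boundary of the strictly convex polygon determined by $\p$ contributes $n\ge 3$ edges in $n$ pairwise distinct directions, while any nonzero indefinite symmetric $2\times 2$ matrix $Q$ has only two isotropic directions. Hence no such $Q$ can vanish on every boundary edge direction, and $(G,\p)$ is super stable.

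The step I expect to be most delicate is (ii): correctly translating the strict sign pattern of $\omega$ into (strict) convexity of the lifting off the affine locus, and identifying the geometric second variation of the lift with the algebraic form $v^\top \Omega v$. The refinement to a triangulation is a technical convenience so that the Maxwell--Cremona correspondence applies in its cleanest form; since the added stresses are zero, both the stress matrix and the identity for the quadratic form are preserved.
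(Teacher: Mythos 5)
You should first note that the paper does not prove this statement at all: it is imported verbatim from Connelly \cite{C83} (Theorem 5 there), where the positive semidefiniteness is established by working directly with the stress/energy form (and later treatments argue by induction on the struts or via projective invariance), not by a lifting argument. So the comparison is really between your proposal and Connelly's proof. Your steps (i) and (iii) are fine: the kernel of $\Omega$ contains $\vecone$ and the two coordinate functions of $\p$, which are independent since the points affinely span $\RR^2$, and a nonzero symmetric $2\times 2$ form has at most two isotropic directions while a strictly convex polygon already supplies at least three pairwise non-parallel boundary edge directions.

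The central step (ii), however, has a genuine gap, for two separate reasons. First, Maxwell--Cremona requires a \emph{planar} straight-line drawing with a face structure, and the frameworks covered by the theorem are exactly the opposite: the struts are diagonals of a strictly convex polygon and in general cross each other (both diagonals of a quadrilateral; the three crossing diagonals of the hexagonal Desargues framework used in this paper). Adding zero-stress edges cannot remove crossings, and subdividing edges at crossing points changes the vertex set, the stress, and the matrix $\Omega$, so ``refine to a triangulation'' is not available. Second, and more fundamentally, the correspondence runs the wrong way for your purpose: Maxwell--Cremona assigns to the \emph{given equilibrium stress} $\omega$ one particular lift (canonical up to affine functions); it says nothing about the sign of $v^\top\Omega v$ for an \emph{arbitrary} height vector $v$, because the lift $\tilde p_i=(p_i,v_i)$ built from an arbitrary $v$ is unrelated to $\omega$ and has no reason to be convex. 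The asserted equivalence ``convexity of the lift $\Longleftrightarrow$ $v^\top\Omega v\ge 0$ for all $v$, with equality exactly on affine $v$'' is precisely the theorem to be proved, and the correspondence does not supply it. As a sanity check that convexity of the $\omega$-lift cannot by itself force PSD-ness: in the genuinely planar situation where Maxwell--Cremona does apply (triangulations of a convex polygon with interior vertices), the equilibrium stress induced by a convex lift has a stress matrix of corank $3$ with a \emph{negative} eigenvalue (this is the Steinitz/Colin de Verdi\`ere picture), so the implication you want fails exactly where your tool works. The PSD conclusion of Connelly's theorem depends essentially on the interior members being crossing diagonals of a strictly convex polygon with the cable/strut sign pattern, and step (ii) needs an argument of that kind (energy estimates as in \cite{C83}, or an induction on the struts), not a lifting argument.
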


Note that by \cite{iterative} the non-singular image projective image of a Desargues framework in convex position as in Theorem \ref{thm:con83} is super stable.  We call any of these frameworks \defn{super stable Desargues frameworks}. Note that when the rung lines do not intersect at a point, the  framework is not universally rigid, or when they do intersect at a point, it is still possible for the Desargues framework not to be even universally rigid. It depends on the configuration.

Given a ladder with a two dimensional span, 
if the three rungs of a  ladder
are parallel and the two poles are parallel
we call it a \defn{step ladder}.
A step ladder flexes smoothly from the plane 
down to the line.

Given a ladder with a two dimensional span, 
if all the lines through the rungs are incident to a point, possibly at infinity, and all the pole lines and rung lines are not in just in two directions,
we call it
an \defn{orchard ladder}.
An orchard ladder is universally rigid but not
super stable~\cite{iterative}.

In order to relate this to frameworks in the line we have the following from \cite{Line-rigidity}.

\begin{theorem}\label{thm:universal}  If $(G,\hat \p)$ is a framework  that is 
universally rigid in $\RR^D$,  then the affine image 
of $(G,\hat \p)$ onto $(G,\p)$ in $\RR^d$, $D\ge d$, is universally rigid in $\RR^d$.
\end{theorem}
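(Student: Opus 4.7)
The plan is to argue by contrapositive: assume $(G,\p)$ admits a non-congruent partner $\q$ in some $\RR^{d'}$ with $|q_i - q_j| = |p_i - p_j|$ on every edge, and produce from it an analogous non-congruent partner $\hat\q$ of $\hat\p$ in some higher-dimensional space, contradicting the universal rigidity of $(G,\hat\p)$ in $\RR^D$.

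First I would normalize the affine map $T\colon \RR^D \to \RR^d$ with $T(\hat p_i) = p_i$. Writing $T(x) = T_0 x + c$ with linear part $T_0$, decompose each $\hat p_i = w_i + k_i$ orthogonally with $w_i \in (\ker T_0)^\perp$ and $k_i \in \ker T_0$; then $w_i$ is uniquely determined by $p_i$ through the linear isomorphism $T_0|_{(\ker T_0)^\perp}$, while the $k_i$ record the "hidden" coordinates of $\hat p_i$. The Pythagorean identity gives
\[
|\hat p_i - \hat p_j|^2 = |w_i - w_j|^2 + |k_i - k_j|^2.
\]
After applying rigid motions to $\hat\p$ in $\RR^D$ and to $\p$ in $\RR^d$ (which preserve universal rigidity on both sides), we may further assume that $T_0|_{(\ker T_0)^\perp}$ is an isometry onto $\RR^d$, so that $|w_i - w_j| = |p_i - p_j|$ and the identity simplifies to $|\hat p_i - \hat p_j|^2 = |p_i - p_j|^2 + |k_i - k_j|^2$.

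Next I would define the lift
\[
\hat q_i := (q_i, k_i) \in \RR^{d'}\oplus \ker T_0.
\]
The same Pythagorean identity applied to $\hat\q$ gives, for every edge $ij$ of $G$,
\[
|\hat q_i - \hat q_j|^2 = |q_i - q_j|^2 + |k_i - k_j|^2 = |p_i - p_j|^2 + |k_i - k_j|^2 = |\hat p_i - \hat p_j|^2,
\]
so $(G,\hat\q)$ has the same edge lengths as $(G,\hat\p)$. If $\hat\q$ were congruent to $\hat\p$, then this identity, applied to all pairs (not just edges), would force $|q_i - q_j| = |p_i - p_j|$ for every $i,j$, i.e.\ $\q$ congruent to $\p$, contrary to our choice. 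Hence $\hat\q$ witnesses the failure of universal rigidity of $(G,\hat\p)$, closing the contrapositive.

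The main obstacle is the normalization that lets us treat $T_0|_{(\ker T_0)^\perp}$ as an isometry. For a general affine $T$ whose linear part has non-unit singular values, the Pythagorean identity picks up coordinate-dependent weights, and the naive lift $\hat q_i = (q_i, k_i)$ no longer automatically preserves edge lengths: the flex would need to satisfy a weighted identity of the form $(q_i - q_j)^\top Q (q_i - q_j) = (p_i - p_j)^\top Q (p_i - p_j)$ for a non-scalar positive definite $Q$, which does not follow from $|q_i - q_j| = |p_i - p_j|$ alone. One must therefore either refine the lift or reduce via the singular value decomposition of $T_0$ to absorb the extra scalings; the orthogonal projection case above is the heart of the argument and this reduction is the technical subtlety.
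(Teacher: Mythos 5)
Your lifting argument is correct for the case it actually covers: when the linear part $T_0$ of the affine map acts as an isometry on $(\ker T_0)^{\perp}$, i.e.\ when $T$ is an orthogonal projection followed by an isometry and a translation. That argument --- reuse the kernel components $k_i$ to lift any realization $\q$ with the edge lengths of $(G,\p)$ to a realization $(q_i,k_i)$ with the edge lengths of $(G,\hat \p)$, then compare all pairwise distances --- is essentially the proof for orthogonal projections given in \cite{Line-rigidity}, which the paper does not reproduce but simply cites; the paper's only additional content for Theorem \ref{thm:universal} is the remark that a general affine map reduces to that case.

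The gap is in that reduction, and your normalization step conceals it: pre- and post-composing $T$ with rigid motions of $\RR^D$ and $\RR^d$ does not change the singular values of $T_0$, so you may not ``further assume'' that $T_0|_{(\ker T_0)^{\perp}}$ is an isometry --- that assumption excludes every affine map with a non-unit singular value, and your final paragraph concedes the point but leaves the SVD reduction as an unproved ``technical subtlety'', which is exactly the part of the statement going beyond orthogonal projection. The missing step is short but genuine: write $T_0=L\circ P$ with $P$ the orthogonal projection onto $(\ker T_0)^{\perp}$ and $L$ invertible (if the restriction is singular, replace $\RR^d$ by the affine span of $\p$, which is harmless since universal rigidity does not depend on the ambient dimension); diagonalize $L$ by rotations to $\mathrm{diag}(\sigma_1,\dots,\sigma_d)$, pull out the uniform scaling by $\max_i \sigma_i$, which preserves universal rigidity trivially, and observe that a contraction of one coordinate by a factor $c=\cos\theta\in(0,1]$ is itself the orthogonal projection onto $\RR^d$ of an isometric copy of the framework rotated by $\theta$ inside $\RR^{d+1}$, hence is covered by the case you did prove. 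Composing these pieces gives the statement for arbitrary affine images, and it is precisely the paper's remark that any affine map is a composition of orthogonal projections and scalings; without it (or an appeal to affine invariance of universal rigidity) your proof establishes the theorem only for partial-isometry projections.
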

In \cite{Line-rigidity} Theorem \ref{thm:universal} is only shown for orthogonal projections, but it is easy to see that any affine map can be obtained by a composition of orthogonal projections and scaling.

We next  notice what happens when we have a 
super stable Desargues framework as in Figure \ref{ladders.fig}.

\begin{lemma}\label{lem:max-rank} If a framework is a Desargues framework that is in equilibrium in the plane as a convex hexagon with the three diagonals as in Figure \ref{ladders.fig}, then it and any orthogonal projection into a line are super stable.
\end{lemma}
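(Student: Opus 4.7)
The lemma has two assertions: the two-dimensional framework is super stable, and any orthogonal projection onto a line is super stable. I would prove them in sequence.

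Planar super stability. I would arrange the convex hexagon with vertex order $A, B, C, c, b, a$ around its boundary. Its nine Desargues-graph edges then split into six boundary edges $\{AB, BC, Cc, cb, ba, aA\}$ and three interior diagonals $\{CA, ca, Bb\}$. The hypothesis that the three rungs are concurrent produces a nonzero equilibrium stress $\omega_D$, unique up to sign (the stress space of a planar Desargues framework is one-dimensional). For a convex Desargues hexagon this stress, after an appropriate global sign choice, is strictly positive on the six boundary edges and strictly negative on the three interior diagonals. Theorem \ref{thm:con83} then gives that $\Omega_D$ is PSD of rank $n-3=3$, and the edge directions of a generic convex hexagon span enough directions to avoid any conic at infinity, so the framework is super stable in $\RR^2$.

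Projection. Let $\pi\colon\RR^2\to\RR^1$ be any orthogonal projection for which the projected framework has no zero-length edges. Since equilibrium is linear, $\omega_D$ is still an equilibrium stress for the projected framework and its stress matrix $\Omega_D$ is unchanged -- still PSD but only of rank $3$. Super stability in $\RR^1$ requires rank $n-d-1=4$, so I must eliminate one direction from $\ker\Omega_D$. The extra kernel direction is the vector $\y$ recording the component of $\p$ orthogonal to the projection direction. The new stress comes from the fact that $\pi(A),\pi(B),\pi(C)$ are three distinct collinear points in $\RR^1$: this yields a one-parameter equilibrium stress $\omega_T$ supported on the triangle edges $\{AB,BC,CA\}$, with $\Omega_T$ of rank one. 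Its sign can therefore be chosen so that $\Omega_T$ is PSD.

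For PSD matrices, $\ker(\Omega_D+\Omega_T)=\ker\Omega_D\cap\ker\Omega_T$. A short computation, substituting the one-dimensional equilibrium relation into the $A$-component of $\Omega_T\y$, shows that it equals a nonzero scalar multiple of
\[
\det\begin{pmatrix} 1 & A_x & A_y \\ 1 & B_x & B_y \\ 1 & C_x & C_y \end{pmatrix},
\]
i.e., twice the signed area of the planar triangle $ABC$. This is nonzero because $A,B,C$ are three vertices of a convex hexagon and hence not collinear. Therefore $\y\notin\ker\Omega_T$, $\ker(\Omega_D+\Omega_T)=\operatorname{span}(\vecone,\x')$ has dimension $2$, and the sum has rank $4$. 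The conic-at-infinity condition is vacuous in $\RR^1$, so the one-dimensional framework is super stable. The most delicate step is confirming the sign pattern of $\omega_D$ in the planar argument; once that is in hand the rank-lifting computation for the projection is short linear algebra.
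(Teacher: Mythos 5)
Your proposal is correct and takes essentially the same route as the paper: establish planar super stability via Theorem~\ref{thm:con83}, reuse that PSD stress for the projected framework, add a PSD stress supported on a (now collinear) triangle, and show the sum has the maximal rank $4$. Your explicit kernel-intersection and signed-area determinant computation simply spells out the step the paper phrases as ``any configuration with the summed stress must be an affine image of both, hence of the line.''
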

\begin{proof} By Theorem \ref{thm:con83} the Desargues configuration is super stable.  Its PSD stress serves as a PSD stress for its  orthogonal projection into the line.  The PSD stress on each triangle in the projection also serves as a PSD stress of the projection.  The sum of those two stresses is a maximum rank stress for the projected configuration in the line, since any other configuration in the line with the sum of those stresses has to be an affine image of both, and that is only possible when it is an affine image of the whole line, and thus super stable. This is the same argument used in \cite{Line-rigidity}.\qedsymbol 
\end{proof}

\section{ Coordinate System}\label{Coordinate-System}

The three-rung ladder has two poles, and two co-linear triangles. One co-linear triangle is at coordinates $A,B,C$ and another is at coordinates $a,b,c$. The three rungs  connect $A,a$, and $B,b$, and $C,c$. We assume that these numbers are such that in $\RR^1$ all the members (edges) have non-zero length. Unless otherwise stated, we will assume that we have an orchard ladder, where the points of the first pole are $A,B,C$ in the $x$ axis, and there are  points $a',b',c'$ in the plane projecting orthogonally onto $a,b,c$ respectively. 
 
\begin{figure}[ht]
\centering 
\includegraphics[scale=0.6]{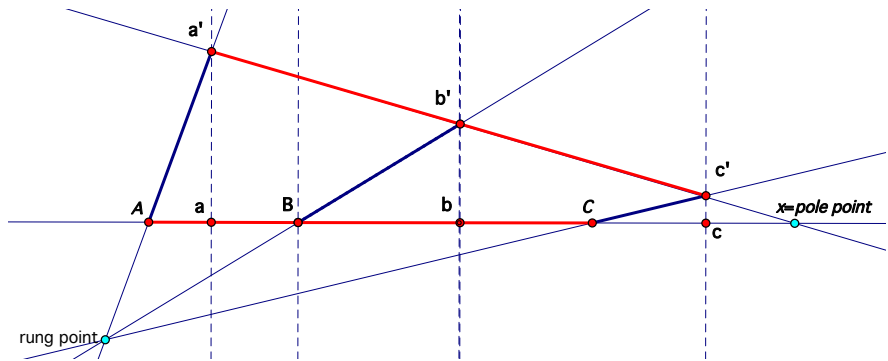}
\captionsetup{labelsep=colon,margin=1.3cm}
\caption{This shows an orchard ladder projected orthogonally into the line through one of its poles. The dashed lines show orthogonal projection onto the horizontal pole as the $x$-axis.  The thin solid lines show projection from the rung point, the intersection of the lines through the three rungs of the ladder. The point $x$ is the intersection of the two poles, the pole point. All the points $A,B,C,a,b,c,x$ are regarded as real numbers on the horizontal axis as well as points in the plane. The poles are in red.}
\label{Projections.fig}
\end{figure}

Imagine that the six points $A,B,C,a,b,c$ are given and we wish to construct the orchard ladder as in Figure \ref{Projections.fig}.  The following construction attempts to do this.

\begin{subsection}{A Construction}\label{SubSect:construction}

Construct the three dashed lines through $a,b,c$ perpendicular to the horizontal pole as in  Figure \ref{Projections.fig}.  
\begin{itemize}
    \item Choose $a'$ anywhere on its dashed line, but not on the horizontal pole, as in the figure.  The success of the construction does not depend on this choice of $a'$.  
    \item Choose the point $x$ on the horizontal pole, but allow it to vary.  Then all the rest of the construction is determined as follows.
    \item The line through $a'$ and  $x$ and the vertical line through $c$  determines ${c'}$.  
    \item The lines $C, c'$ and $A,a'$ determine the rung point. 
    \item The line through the rung point and $B$, and the pole line through $a'$ and $c'$, determine the point $b'$.  
    \item If we have chosen $x$ well and $b'$ is on the vertical dashed line through $b$, we have an orchard ladder projecting orthogonally onto our framework in the line.  There may not be such an orchard ladder and every choice of $x$ will fail. Furthermore, if $x$ is chosen at infinity (i.e. the pole lines are parallel), and the rung point is also at infinity, then, instead of an orchard ladder, we get a step ladder.  We will see later how to choose $x$ properly.
    \end{itemize}
\end{subsection}

From this picture one can see that the points $A,B,C$ project to the points $a',b',c'$, in order, from the rung point.  Then $a',b',c'$, in order, project orthogonally onto $a,b,c$.  Throughout both of these projections, the point $x$ is fixed.

\section{Cross-ratio}\label{Cross-Ratio}

We recall from projective geometry the notion of a cross-ratio.  Let $z_1, z_2, z_3, z_4$ be four points on the real line, where we assume that each of the four terms below are distinct.  But one may be "at infinity", in the sense of the real projective line.  Their \defn{cross-ratio} is defined as
\[
\frac{(z_3-z_1)(z_4-z_2)}{(z_3-z_2)(z_4-z_1)}.
\]
On the projective line, if, 
say, $z_4$ is at infinity, then the cross ratio becomes
\[
\frac{(z_3-z_1)}{(z_3-z_2)}.
\]
The following is a well-known result, see \cite{Coxeter}, where $\PP^1$ is the real projective line regarded as the real line $\RR^1$ with one additional point at  infinity:
\begin{theorem}\label{thm:orchard}
If $f: \PP^1 \rightarrow \PP^1 $ is a projective map between real projective lines, and $f(z_i)=w_i$ for $i=1,2,3,4$, where all the  factors below are non-zero, then the cross ratios are preserved. When all points are finite,
we get
\[
\frac{(z_3-z_1)(z_4-z_2)}{(z_3-z_2)(z_4-z_1)}=\frac{(w_3-w_1)(w_4-z_2)}{(w_3-z_2)(w_4-w_1)}.
\]
\end{theorem}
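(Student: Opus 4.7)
The plan is to reduce to the affine form of a projective self-map of $\PP^1$, namely a Möbius transformation $f(z) = (az+b)/(cz+d)$ with $ad-bc \neq 0$, and then verify the identity by a direct computation plus an accounting of cancellations.

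First I would establish the key difference formula
\[
f(z_i) - f(z_j) = \frac{az_i+b}{cz_i+d} - \frac{az_j+b}{cz_j+d} = \frac{(ad-bc)(z_i-z_j)}{(cz_i+d)(cz_j+d)}.
\]
Substituting this into the cross-ratio
\[
\frac{(w_3-w_1)(w_4-w_2)}{(w_3-w_2)(w_4-w_1)},
\]
the numerator of each of the four differences contributes a factor $(ad-bc)$, and these cancel between the top and bottom of the cross-ratio. For the denominators $(cz_i+d)(cz_j+d)$, note that in the full cross-ratio each index $i\in\{1,2,3,4\}$ appears exactly once in a factor on top and exactly once in a factor on the bottom: for instance $z_1$ appears in $w_3-w_1$ on top and in $w_4-w_1$ on the bottom. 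Hence every $(cz_i+d)$ cancels, and what is left is exactly the cross-ratio in the $z_i$'s. The hypothesis that all factors are nonzero guarantees that no $z_i$ is a pole of $f$ and that no two $z_i$ coincide, so nothing we are cancelling is zero.

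To handle the case of a point at infinity cleanly I would pass to homogeneous coordinates. Writing $\tilde z_i = (z_i,1)^\top \in \RR^2$ (with $(1,0)^\top$ representing the point at infinity), a projective map of $\PP^1$ acts on homogeneous coordinates by an invertible $2\times 2$ matrix $M$, and the cross-ratio can be written as a ratio of $2\times 2$ determinants
\[
\frac{\det[\tilde z_3\ \tilde z_1]\,\det[\tilde z_4\ \tilde z_2]}{\det[\tilde z_3\ \tilde z_2]\,\det[\tilde z_4\ \tilde z_1]}.
\]
Each such determinant gets multiplied by $\det(M)$ under the map, and the four factors of $\det(M)$ cancel, yielding invariance for free. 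When all four points are finite this specialises to the stated identity in affine coordinates, and when one is at infinity the limiting formula stated in the excerpt is recovered.

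There is no real obstacle here: the content is entirely the algebraic identity above. The only care needed is bookkeeping of the nonvanishing hypotheses and, if one wants a self-contained argument, a brief justification that every projective self-map of $\PP^1$ is of Möbius form, which follows immediately from the definition of a projective map as one induced by an invertible linear map of the underlying $2$-dimensional vector space.
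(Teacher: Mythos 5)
Your proof is correct, but there is nothing in the paper to compare it against: the paper states this cross-ratio invariance as a classical fact and simply cites Coxeter, giving no proof of its own. Your argument is the standard self-contained one, and both halves are sound: the Möbius difference formula $f(z_i)-f(z_j)=(ad-bc)(z_i-z_j)/\bigl((cz_i+d)(cz_j+d)\bigr)$ together with the observation that each index occurs exactly once in the numerator and once in the denominator does give complete cancellation of the $(ad-bc)$ and $(cz_i+d)$ factors, and your nonvanishing bookkeeping (no $z_i$ at a pole, no coincident points) is exactly what the hypothesis ``all factors are non-zero'' is for. Two small remarks. First, in the homogeneous-coordinate version you should say explicitly that the determinant expression is unchanged if each $\tilde z_i$ is rescaled individually (again because each $\tilde z_i$ appears once on top and once on the bottom); this is what makes the expression well defined on $\PP^1$ and legitimizes computing with the representatives $M\tilde z_i$ rather than the normalized representatives of the $w_i$, after which the four factors of $\det(M)$ cancel as you say. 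Second, you have tacitly corrected the paper's statement: the displayed identity in the theorem contains typographical slips ($w_4-z_2$ and $w_3-z_2$ should read $w_4-w_2$ and $w_3-w_2$), and your computation proves the corrected identity, which is the one actually used later in the paper (Theorem \ref{thm:ratio}). Since the paper leans on the citation, your write-up is, if anything, more complete than what the paper provides, at the cost of a short computation the authors chose to omit.
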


From the discussion in Section \ref{Coordinate-System}, we see that the points $x,A,B,C$ are sent to the points $x,a,b,c$, in order, by a series of two projections, when there is such an ladder projecting onto the framework in the line. Suitable care is to be taken when one of the points is at infinity.  So we get the following:

\begin{theorem}\label{thm:ratio} There is a ladder projecting onto the points $A,B,C,a,b,c$, as in Figure \ref{Projections.fig}, if and only if the following quadratic equation in finite $x$ holds:
\begin{equation}\label{eq: ratio}
(B-x)(C-A)(b-a)(c-x)=(b-x)(c-a)(B-A)(C-x),
\end{equation}
or equivalently,
\begin{equation*}
\frac{(B-x)(C-A)}{(B-A)(C-x)}=\frac{(b-x)(c-a)}{(b-a)(c-x)}.
\end{equation*}
or 
$\frac{(C-A)}{(B-A)}=\frac{(c-a)}{(b-a)}$ and $x$ is at infinity.
\end{theorem}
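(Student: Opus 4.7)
The plan is to read the theorem through the lens of cross-ratios: the construction of Section \ref{SubSect:construction} exhibits the map sending $(x,A,B,C)$ to $(x,a,b,c)$ as a composition of a central projection (from the rung point, sending $A,B,C$ to $a',b',c'$) followed by an orthogonal projection (sending $a',b',c'$ to $a,b,c$), with the pole point $x$ fixed by both. Since each of these is a projective map of $\PP^1\to \PP^1$ (once we restrict to the appropriate lines), the composition is projective, and Theorem \ref{thm:orchard} then immediately converts the existence of a ladder into an identity of cross-ratios.

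For the forward direction, I would argue as follows. Suppose an orchard ladder exists with pole intersection point $x$. Restricting the central projection from the rung point to the map between the two pole lines gives a projective bijection between them carrying $A,B,C$ to $a',b',c'$ and fixing $x$ (since $x$ lies on both poles). Orthogonal projection onto the horizontal axis is affine on the slanted pole, hence projective, sends $a',b',c'$ to $a,b,c$ and fixes $x$. Theorem \ref{thm:orchard} applied to the composite gives
\[
\frac{(B-x)(C-A)}{(B-A)(C-x)}=\frac{(b-x)(c-a)}{(b-a)(c-x)},
\]
which, cleared of denominators, is exactly (\ref{eq: ratio}).

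For the converse, given $x$ satisfying (\ref{eq: ratio}), I would run the construction of Section \ref{SubSect:construction}: pick any $a'$ off the horizontal axis, let the line through $a'$ and $x$ together with the vertical through $c$ determine $c'$, let the rung point be the intersection of lines $Aa'$ and $Cc'$, and let $b'$ be the intersection of the line through the rung point and $B$ with the pole line through $a'$ and $c'$. By construction the composite map $(x,A,B,C)\mapsto (x,a',b',c')\mapsto (x,a,b',\text{(image of $c'$)})$, restricted to what it does on the horizontal axis via vertical projection, is projective and sends $x\mapsto x$, $A\mapsto a$, $C\mapsto c$. The image of $B$ along the horizontal axis is then a well-defined function of $x$, and cross-ratio invariance forces it to satisfy the same relation as in (\ref{eq: ratio}); since that equation determines the fourth coordinate uniquely given the other three and $x$, the image of $B$ must equal $b$, so $b'$ lies on the vertical line through $b$ and the ladder is realized.

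The $x=\infty$ case (parallel poles, step ladder) is obtained by taking the limit: equation (\ref{eq: ratio}) divided by $x^2$ tends to $(C-A)(b-a)=(c-a)(B-A)$, which is the condition stated. The main subtlety I anticipate is the bookkeeping in the reverse direction — ensuring that the two projective maps actually do compose as claimed when one of $x, A, \ldots, c$ coincides with a point where a projection would be undefined, and verifying that the cross-ratio equality really pins down $b'$ uniquely rather than admitting a spurious branch. Apart from that, the argument is essentially a dictionary between the projective-geometric content of the construction and the algebraic identity (\ref{eq: ratio}), with the quadratic nature of (\ref{eq: ratio}) in $x$ matching the (at most) two choices of pole point that realize the ladder.
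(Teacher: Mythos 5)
Your proposal is correct and follows essentially the same route as the paper: the forward direction is the cross-ratio invariance of the two-projection composition fixing $x$ (Theorem \ref{thm:orchard}), and the converse runs the construction of Subsection \ref{SubSect:construction} and uses the fact that the cross-ratio relation determines the fourth point uniquely, which is exactly the paper's (terser) argument that $x$ satisfying (\ref{eq: ratio}) forces the last projection to close up, with the $x$ at infinity case handled as the projective/limiting form of the equation. Your filling-in of the uniqueness step for $b'$ is a welcome elaboration of what the paper leaves implicit.
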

The case with $x$ at infinity is
just a solution to 
the projective version of Equation~\ref{eq: ratio}.

\begin{proof} If there is an orchard ladder projecting onto the points $A,B,C,a,b,c$, the discussion above shows that the equation (\ref{eq: ratio}) holds. 

Conversely suppose equation (\ref{eq: ratio}) holds.  Use the sequence of projective constructions in Subsection \ref{SubSect:construction} to start the construction of the orchard ladder.  Any value of $x$ that satisfies Equation (\ref{eq: ratio}) must be the point that is in the last projection that is determined by the other five points. Since the orchard ladder is universally rigid, and it projects orthogonally into the line, by Theorem \ref{thm:universal}, it is universally rigid. \qedsymbol
\end{proof}

\section{Discriminant}\label{Sect:discriminant}

The basic problem is given the $6$ coordinates of the ladder graph on the line, determine whether that configuration in $\RR^1$ is universally rigid, is super stable, is flexible from $\RR^1$ into $\RR^3$ and then down into another realization in $\RR^2$, or flexes from $\RR^1$ as a step ladder and just flexes in $\RR^2$.  A crucial number to calculate is the discriminant of the quadratic equation (\ref{eq: ratio}).  This is a homogeneous polynomial in the $6$ variables with 85 terms $D(A,B,C,a,b,c)$.  The coefficients of the $x^2$, $x$, and constant terms from the polynomial in equation (\ref{eq: ratio}) are respectively:
\begin{eqnarray}\label{eqn:dis}
\mathcal{A}&=&(C-A)(b-a)-(c-a)(B-A)\label{eqn:dis-1}\\
\mathcal{B}&=&-(B+c)(C-A)(b-a)+(b+C)(c-a)(B-A)\label{eqn:dis-2}\\
\mathcal{C}&=&cB(C-A)(b-a)-Cb(c-a)(B-A).\label{eqn:dis-3}
\end{eqnarray}
Since all of the edge lengths are non-zero, one can 
verify that at least one
of $\mathcal{A}$, $\mathcal{B}$, or $\mathcal{C}$ is non-zero.
The discriminant of this quadratic is
\[
D=D(A,B,C,a,b,c)=\mathcal{B}^2-4\mathcal{AC}.
\]

When 
$D(A,B,C,a,b,c) > 0$, there are two projective
solutions, when $D=0$ there is one, and
when $D<0$ none.
By \ref{thm:ratio} each solution corresponds
to a lift that is a ladder in $\RR^2$.
Meanwhile, when $\mathcal{A}=0$, then one
of the solutions has $x$ at infinity, in which
case the poles of the lift are parallel.

\begin{theorem} \label{thm:newRatio}
Let $(G,\p)$ be a framework of the 
ladder graph in $\RR^1$.
Then there exists an ladder  in $\RR^2$ projecting
down to $(A,B,C,a,b,c)$
if and only if $D(A,B,C,a,b,c) \ge 0$.
If $D > 0$ this must be an orchard ladder.
\end{theorem}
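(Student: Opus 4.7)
The plan is to translate Theorem~\ref{thm:ratio} into projective language: a ladder in $\RR^2$ projecting orthogonally down to $(A,B,C,a,b,c)$ exists if and only if the homogenized binary quadratic
\[
\mathcal{A}\, X^2 + \mathcal{B}\, XY + \mathcal{C}\, Y^2 = 0
\]
has a real root $[X:Y] \in \PP^1(\RR)$. The affine roots (those with $Y \neq 0$) correspond to finite values $x = X/Y$ satisfying Equation~(\ref{eq: ratio}), while the root $[1:0]$ at infinity appears exactly when $\mathcal{A}=0$, matching the ``$x$ at infinity'' branch of Theorem~\ref{thm:ratio} in which the two poles are parallel. A nontrivial real binary quadratic form has a real projective zero precisely when its discriminant is nonnegative, and that discriminant is exactly $D = \mathcal{B}^2 - 4\mathcal{A}\mathcal{C}$. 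This gives the equivalence ``a lift exists $\iff D \geq 0$.''

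For the second clause, the plan is to rule out step-ladder lifts whenever $D > 0$. A step ladder has both poles parallel and all three rungs parallel; taking the pole line through $A,B,C$ as the $x$-axis, the orthogonal projection of a step ladder onto the $x$-axis yields a configuration in which $a-A$, $b-B$, and $c-C$ are all equal to the common $x$-component of the rung vector. Substituting the relation $a-A = b-B = c-C$ directly into the formulas~(\ref{eqn:dis-1})--(\ref{eqn:dis-3}) is a short calculation that yields $\mathcal{A} = \mathcal{B} = 0$, while $\mathcal{C}\neq 0$ because the edges have nonzero length; hence $D = 0$ for any line configuration admitting a step-ladder lift. Contrapositively, when $D > 0$ no lift is a step ladder. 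Since the construction in Subsection~\ref{SubSect:construction} always produces a lift whose three rung lines concur at the rung point, any such non-step lift satisfies the definition of an orchard ladder, proving the second clause.

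The main piece of bookkeeping to handle carefully is the boundary case $\mathcal{A}=0$, where one projective root is the point at infinity and the other (if any) is finite, so that both branches of Theorem~\ref{thm:ratio} must be invoked simultaneously; once this projective dictionary is in place, the rest of the argument reduces to the standard real-discriminant criterion together with the small algebraic identity $a-A = b-B = c-C \Rightarrow \mathcal{A}=\mathcal{B}=0$, neither of which I anticipate as a serious obstacle.
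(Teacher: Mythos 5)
Your proposal is correct and follows essentially the same route as the paper: the first clause is obtained exactly as in the paper's discussion preceding the theorem, by reading Theorem \ref{thm:ratio} projectively so that a lift corresponds to a real projective root of the (nontrivial) quadratic with coefficients $\mathcal{A},\mathcal{B},\mathcal{C}$, which exists precisely when $D\ge 0$, with $\mathcal{A}=0$ giving the root at infinity and parallel poles. For the second clause the paper likewise shows a step-ladder lift forces $\mathcal{A}=\mathcal{B}=0$ (hence $D=0$), only deriving it via ``parallel poles $\Rightarrow x$ at infinity $\Rightarrow \mathcal{A}=0$'' and then ``$\mathcal{B}=0 \Leftrightarrow B-b=C-c$,'' whereas you substitute $a-A=b-B=c-C$ directly; this is the same computation in slightly different form.
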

\begin{proof}
The first part of the theorem is proven
by the above discussion, using Theorem~\ref{thm:ratio}.

Next we argue that get a step ladder, 
if and only if we 
have $\mathcal{A}=0$ and $\mathcal{B}=0$.
This will imply that $D=0$, as stated 
in the theorem.

To get an step ladder, we have  parallel
poles, which means that $\mathcal{A}=0$.
This implies  
$(C-A)(b-a)=(c-a)(B-A)$.
In this case
$\mathcal{B} = 0$ if and only if   
$B+c = b+C$ from equation (\ref{eqn:dis-3}). 
This is equivalent to, $B-b = C-c$.
This is together with parallel poles is equivalent
to parallel rungs, giving us a step ladder. 
\end{proof}

\begin{lemma}\label{lemma:projection} Suppose $(G,\p)$ is a  framework of the ladder graph in $\RR^1$ with all edges of non-zero length, and its discriminant $D(\p)=0$. Then it has a lift
to a ladder framework
$(G,\q)$ in $\RR^2$, from the construction in Section \ref{SubSect:construction}, that projects orthogonally onto it, where the following holds: 
\begin{enumerate}
    \item The rung point, pole point, and projection point (on the line at infinity) of $(G,\q)$ onto $(G,\p)$ are collinear as in Figure \ref{Projections-3.fig},
    \item $(G,\p)$ is not super stable,
    \item $(G,\p)$ is universally rigid if and only if the rungs of $(G,\q)$ are not parallel, or the poles of $(G,\q)$ are not parallel.
 \end{enumerate} \label{lem:D=0}
 \end{lemma}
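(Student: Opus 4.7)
The plan is to prove the three parts in sequence, using the projective setup from Section~\ref{SubSect:construction} and the fact that $D = 0$ pins down a unique 2D lift~$\q$ (up to reflection) by Theorem~\ref{thm:newRatio}.

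\textbf{Part (1).} I would treat the subcases $\mathcal{A} = 0$ and $\mathcal{A} \neq 0$ separately. When $\mathcal{A} = 0$, the condition $D = 0$ forces $\mathcal{B} = 0$, and by the proof of Theorem~\ref{thm:newRatio} the lift is a step ladder; the rung point, the pole point, and the orthogonal projection direction all lie on the line at infinity, so they are trivially collinear. When $\mathcal{A} \neq 0$ the lift has finite rung and pole points, and I would place coordinates with the pole point at the origin, the lower pole the $x$-axis, and the upper pole the line $y = mx$, writing $R = (h,k)$ for the rung point. The composite projectivity~$\phi$ of the $x$-axis given by perspectivity from~$R$ onto the upper pole followed by orthogonal projection back sends $A, B, C$ to $a, b, c$, so by Theorem~\ref{thm:ratio} its fixed points are exactly the roots of~(\ref{eq: ratio}). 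A short coordinate calculation shows the fixed points of~$\phi$ are $t = 0$ (the pole point, which lies on both poles) and $t = h$ (because the perspectivity line from $R$ to $(h,0)$ is vertical and therefore preserved by orthogonal projection). Since $D = 0$ forces the two roots to coincide, $h = 0$: the rung point lies on the vertical line through the pole point, which meets the line at infinity at the orthogonal-projection direction, giving the claimed three-point collinearity.

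\textbf{Part (2).} Super-stability of $(G,\p)$ in $\RR^1$ would require a PSD equilibrium stress of rank $n - d - 1 = 4$; I would argue no such stress exists. Any 2D equilibrium stress of the lift~$\q = (\p, \y)$ is simultaneously a 1D equilibrium stress for $(G,\p)$ with $\y$ in its kernel (in addition to $\vec 1$ and $\p$), so it has rank at most~$3$. The triangle stresses on $\{A, B, C\}$ and on $\{a, b, c\}$ are rank-$1$ PSD, and because $A, B, C$ lie on the $x$-axis of~$\q$ and $a', b', c'$ lie on its upper pole, $\y$ also belongs to the kernel of each triangle stress. Hence every nonnegative combination of these three stresses has rank at most~$3$. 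The four-dimensional stress space of $(G,\p)$ contains one additional basis direction, supported nontrivially on the rung edges; to conclude that no PSD stress of rank~$4$ exists I would show by direct computation in the coordinates above that this rung-involving basis stress is indefinite, so it cannot combine with the PSD cone generated by the triangle and lift-projected stresses to produce a rank-$4$ PSD stress. This indefiniteness calculation is the technical heart of the argument and the main obstacle I anticipate.

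\textbf{Part (3).} For the backward direction, if the lift has non-parallel rungs or non-parallel poles then it is an orchard ladder, which is universally rigid in~$\RR^2$ by~\cite{iterative}; Theorem~\ref{thm:universal} then promotes this to universal rigidity of the orthogonal projection $(G,\p)$ in~$\RR^1$. For the forward direction, suppose the lift has parallel rungs and parallel poles, i.e., it is a step ladder; then by Theorem~\ref{thm:newRatio} the 1D framework satisfies $a = A + v$, $b = B + v$, $c = C + v$ for some $v \in \RR$. I would exhibit an explicit equivalent-but-not-congruent 2D framework: take $q_A = (A, 0)$, $q_B = (B, 0)$, $q_C = (C, 0)$ and $q_a = q_A + w$, $q_b = q_B + w$, $q_c = q_C + w$ for any $w \in \RR^2$ with $|w| = |v|$ and $w \neq (\pm v, 0)$. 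All 1D edge lengths match those of $(G, \p)$, but no planar isometry fixing $q_A, q_B, q_C$ can send $q_a, q_b, q_c$ to their $\p$-positions, so $(G, \p)$ is not universally rigid.
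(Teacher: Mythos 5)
Parts (1) and (3) of your plan are sound, and part (1) is a genuinely different route from the paper's: you identify the projective roots of (\ref{eq: ratio}) with the fixed points of the composite projectivity $\phi$ (perspectivity from the rung point followed by orthogonal projection), compute that these fixed points are the pole point and the foot of the vertical through the rung point, and let the double root force them to coincide. The paper instead handles the $\mathcal{A}=0$ step-ladder case and transfers the collinearity to general $D=0$ configurations by a projective transformation of the line sending the pole point to infinity; your computation is more explicit, the paper's is softer but also yields (2) for free. Your part (3) is essentially the paper's (orchard-ladder lift plus Theorem~\ref{thm:universal}), with a welcome explicit non-congruent planar realization in the step-ladder case, which the paper leaves implicit.

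The genuine gap is in part (2). Your concluding inference --- ``the extra basis stress is indefinite, so it cannot combine with the PSD cone generated by the triangle and lift-projected stresses to produce a rank-4 PSD stress'' --- is not a valid principle: an indefinite matrix plus a PSD matrix can easily be PSD (e.g.\ $\mathrm{diag}(1,-1)+\mathrm{diag}(0,2)$). What you would actually need is that \emph{every} equilibrium stress of $(G,\p)$ with nonzero component along the extra direction fails to be PSD, and that is precisely the hard content; it must use $D=0$ in an essential way, since for $D>0$ rank-4 PSD stresses do exist (Theorem~\ref{discriminant:thm}). Note also that when $\mathcal{A}\neq 0$ the framework is universally rigid by your own part (3), so you cannot hope to contradict universal rigidity directly. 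The paper sidesteps the computation entirely: rank and PSD-ness of stress matrices (hence super stability) are invariant under projective transformations of the line, the condition $D=0$ is preserved, and one can move the double root $x$ to infinity so that $\mathcal{A}=0$; there the lift is a step ladder, the framework flexes, and a maximal-rank PSD stress would contradict Theorem~\ref{thm:super}. Either adopt that reduction, or replace your final step with an actual proof that every PSD equilibrium stress annihilates the lift vector $\y$; as written, the step you yourself flag as the ``technical heart'' is both unproven and resting on a false implication. (A smaller presentational point: the two collinear triangle stresses are themselves stresses of the degenerate planar lift, so your list of ``nice'' stresses is redundant, and the claim that they span a $3$-dimensional subspace complemented by a single rung-supported direction also needs justification.)
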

 
\begin {proof} 
From Theorem~\ref{thm:newRatio}, $(G,\p)$ must lift to a ladder
$(G,\q)$.

One useful special case is when the coefficient $\mathcal{A}=0$ in (\ref{eqn:dis}) and $D=0$. As described in the proof of
Theorem~\ref{thm:newRatio}, the lift is a step ladder.
This shows that Condition (1.) is true in this case.  
When a projective transformation takes the pole point $x$ back to a more general value, Condition (1.) must still be true.
Going the other way, 
the property that $D=0$ is preserved under 
projective transformations of the line, 
and we can find
such a projective transformation taking $x$ to infinity,
so that $\mathcal{A}=0$.

Condition (2.): Clearly when $x$ is at infinity, $(G,\p)$ does not have a PSD stress matrix of maximal rank $6-1-1=4$, since if it did, it would be universally rigid then, and it is not since it would then be a step ladder.  (There is no conic at infinity to worry about in the line.)  But under a projective transformation, the rank and PSD nature of any stress matrix are preserved.  Thus any projective image will not be super stable was well, and Condition (2.) holds.

Condition (3.) follows from Theorem \ref{thm:ratio} and Theorem \ref{thm:universal}. (The lift in the plane is an orchard ladder and is thus universally rigid.) 
\qedsymbol
\end{proof}

We have the following:
\begin{theorem}\label{discriminant:thm}  
Let $(G,\p)$ be a framework of the 
ladder graph in $\RR^1$, with
 $D(A,B,C,a,b,c)>0$.
Then there exists a two-dimensional super stable Desargues framework $(G,\q)$ in $\RR^2$
that projects to $(G,\p)$. 
Thus,  
$(G,\p)$ itself is super stable.
\end{theorem}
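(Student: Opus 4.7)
The plan is to realize $(G, \p)$ as the orthogonal projection of a super-stable convex hexagonal Desargues framework in $\RR^2$, and then to apply Lemma~\ref{lem:max-rank}. Since $D > 0$, Theorem~\ref{thm:newRatio} first supplies an orchard ladder lift $(G, \q_0)$ in $\RR^2$ whose orthogonal projection onto the $x$-axis equals $(G, \p)$; however, in $(G, \q_0)$ the points $A, B, C$ lie on the $x$-axis while $a', b', c'$ lie on the pole line through the meeting point $x$, so both ``triangles'' of the Desargues graph are degenerate and Theorem~\ref{thm:con83} cannot be applied directly.

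The key step is to deform $(G, \q_0)$ into a strictly convex hexagonal Desargues framework $(G, \q)$ in $\RR^2$ whose orthogonal projection to the $x$-axis is still $(G, \p)$. I would keep the six $x$-coordinates fixed at $(A, B, C, a, b, c)$ and choose $y$-coordinates $(y_A, y_B, y_C, y_a, y_b, y_c)$ satisfying: (i) the triangles $ABC$ and $abc$ are non-degenerate; (ii) the rung lines $Aa, Bb, Cc$ are concurrent; and (iii) the six vertices lie in strictly convex position in the cyclic order $A, B, C, c, b, a$, which is the unique ordering whose six consecutive boundary edges $AB, BC, Cc, cb, ba, aA$ are graph edges of $G$. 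Condition (ii) is a single polynomial equation on the six $y$-coordinates and cuts out a five-dimensional family of Desargues lifts of $(G, \p)$; within this family I would start near $(G, \q_0)$, give $(y_A, y_B, y_C)$ a strictly concave profile and $(y_a, y_b, y_c)$ a strictly convex profile, solve the concurrence equation for one of the coordinates, and then dilate the $y$-direction until the hexagon is strictly convex.

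Once $(G, \q)$ is constructed, Theorem~\ref{thm:con83} applies: the resulting equilibrium stress is positive on the six boundary edges and negative on the three interior diagonals $CA, ca, Bb$, so the stress matrix of $(G, \q)$ is PSD of rank $n - 3 = 3$ and $(G, \q)$ is super stable in $\RR^2$. Because $(G, \q)$ projects orthogonally onto $(G, \p)$, Lemma~\ref{lem:max-rank} then combines this Desargues PSD stress with the one-dimensional PSD equilibrium stresses on each of the two collinear triangles of $(G, \p)$ to produce a PSD stress matrix of maximal rank $n - d - 1 = 4$ for $(G, \p)$, which certifies that $(G, \p)$ is super stable in $\RR^1$. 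The main obstacle is the deformation step: verifying that the five-dimensional Desargues family actually meets the open strict-convexity condition (iii), and doing so uniformly across all orderings of $(A, B, C, a, b, c)$ on the line (particularly when the segments $[A, C]$ and $[a, c]$ overlap or interleave, so that the natural ``upper line / lower line'' arrangement inherited from the orchard lift must be rearranged before strict convexity can be forced while maintaining the Desargues incidence).
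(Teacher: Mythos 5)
Your outline (lift via Theorem~\ref{thm:newRatio}, deform to a convex-position Desargues framework over the same projection fibers, then apply Theorem~\ref{thm:con83} and Lemma~\ref{lem:max-rank}) follows the paper's strategy, but the deformation step as you have set it up contains a genuine gap: it is in general \emph{impossible} to keep the six $x$-coordinates fixed at $(A,B,C,a,b,c)$ and place the vertices in strictly convex position with all hull edges being graph edges. For any convex polygon the $x$-coordinate, read cyclically around the boundary, is weakly unimodal (one ascending run, one descending run), so it has at most one strict local maximum. Now take $c=3$ in the paper's own example family, i.e.\ $(A,B,C,a,b,c)=(1,3,5,2,4,3)$: all nine edge lengths are non-zero and $\mathcal{A}=6$, $\mathcal{B}=-30$, $\mathcal{C}=32$, so $D=\mathcal{B}^2-4\mathcal{A}\mathcal{C}=132>0$ and the framework is super stable. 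In your cyclic order $A,B,C,c,b,a$ the $x$-coordinates read $1,3,5,3,4,2$, which has two strict local maxima (at $C$ and at $b$), so no convex hexagon with these $x$-coordinates realizes that ordering; the same failure occurs for the other two Hamiltonian cycles of the prism graph, $A,C,B,b,c,a$ and $B,A,C,c,a,b$ (incidentally, your claim that the ordering is unique is false --- there are three such cycles). Hence your five-parameter family of lifts over the fixed vertical fibers never meets condition (iii), and no choice of concave/convex $y$-profiles or vertical dilation can repair this; the ``interleaving'' case you flag at the end is not merely hard to check uniformly, it is fatal to the construction.

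The paper avoids exactly this obstruction by not demanding convexity in the original coordinates. Using that $D>0$ forces the pole point, the rung point, and the projection point $p_{\infty}$ of the orchard-ladder lift to be non-collinear, it applies a non-singular projective transformation of the plane sending the pole point and rung point to the line at infinity (while $p_{\infty}$ stays finite), performs the perturbation in that picture so the perturbed Desargues framework is a convex hexagon whose vertices remain on the lines through $p_{\infty}$ (with a separate case analysis depending on where $p_{\infty}$ sits relative to the pole lines), and then transports back. The resulting lift $(G,\q)$ projects orthogonally onto $(G,\p)$ but is only a \emph{projective image} of a convex-position Desargues framework; its super stability comes from the projective invariance noted after Theorem~\ref{thm:con83} (from \cite{iterative}), and only then does the stress-summing argument of Lemma~\ref{lem:max-rank} apply to the projection. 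To salvage your approach you would likewise have to work up to projective equivalence (equivalently, allow a central rather than orthogonal projection onto the line), since convexity over the fixed vertical fibers is simply unavailable for some configurations with $D>0$.
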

\begin{proof} 
Since $D > 0$, from Theorem~\ref{thm:newRatio}, there exists an orchard ladder configuration $(G,\rr)$ in $\RR^2$ that projects onto $(G,\p)$.  
Call the point at infinity $p_{\infty}$ that is used to project the ladder into the line.  
Since $D(A,B,C,a,b,c) \neq 0$, then $p_{\infty}$, the pole point $x$, and rung point are not collinear by  Lemma \ref{lemma:projection}.  
We next show that there is a super stable configuration $\q$ (that can be chosen arbitrarily close to $\rr$ if desired) that will also project from $p_{\infty}$ into the line $L$ containing the ladder.  Note that the object is to choose $\q$ such that each vertex $q_i\ne p_{\infty}$ of $\q$ is on the line through  $p_{\infty}$ and $p_i$.  Once this is done, it does not matter where the line $L$ is that we are projecting onto.  So, for convenience, we can take a non-singular projective image of the configuration to help see what is going on.  In particular, we can take the pole point $x$ and the rung point to be on the line at infinity, but  the projection point $p_{\infty}$ is NOT on the line at infinity.  So we get a situation, where one case is shown in Figure \ref{fig:ladder-perturb}.

\begin{figure}[ht]
\centering 
\includegraphics[scale=0.25]{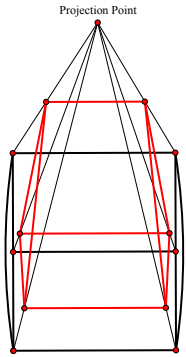}
\captionsetup{labelsep=colon,margin=1.3cm}
\caption{This shows a perturbation of a ladder in black, in the plane to get a super stable Desargues configuration, in red, that projects from the Projection Point, $p_{\infty}$, shown,  onto $L$, not shown.  }
\label{fig:ladder-perturb}
\end{figure}

Here $p_{\infty}$ is chosen ``between" the two pole lines, and it is easy to see how to do the perturbation. The rung point does not have to change.  The super stability of the projection into the line follows from Lemma \ref{lem:max-rank}.   When $p_{\infty}$ is outside that region, it is a bit harder to see how to do the perturbation.

Next we consider the case when $p_{\infty}$ is outside the region between the two pole lines.  
\label{ladder-perturb.fig}
In Figure \ref{rung-motion.fig} we have shown the final Desargues configuration that projects onto the line. Without loss of generality, we have assumed that the pole point is in right on the $A,B,C$ pole. Since we assume the rung point is on the $A,a'$ line (by definition) and outside the region between the two poles, it might as well  be down and to the left.  There is no loss in generality in assuming that the $A,a'$ slope is positive as in the picture. In the perturbed picture, the rung point is moved down and to the left, while the $a'$ point moves down to $a''$ and $c''$ moves up from $c'$, but not as much as $a'$ moves down.  Thus $a'', b'', c''$ forms a triangle outside of the $A'', a'', c'', C$ quadrilateral, as does the $A'',B, C$ triangle. Thus the whole configuration forms a convex hexagon and is a super stable Desargues configuration.  Thus by Lemma \ref{lem:max-rank}, again, the projection into the line is super stable.
 \qedsymbol
 
 Notice also, with this argument, we do not have to assume that the configuration is generic, only that $D(A,B,C,a,b,c)>0$ to insure that, at the  configuration, the framework is super stable.

\begin{figure}[ht]
\centering 
\includegraphics[scale=0.55]{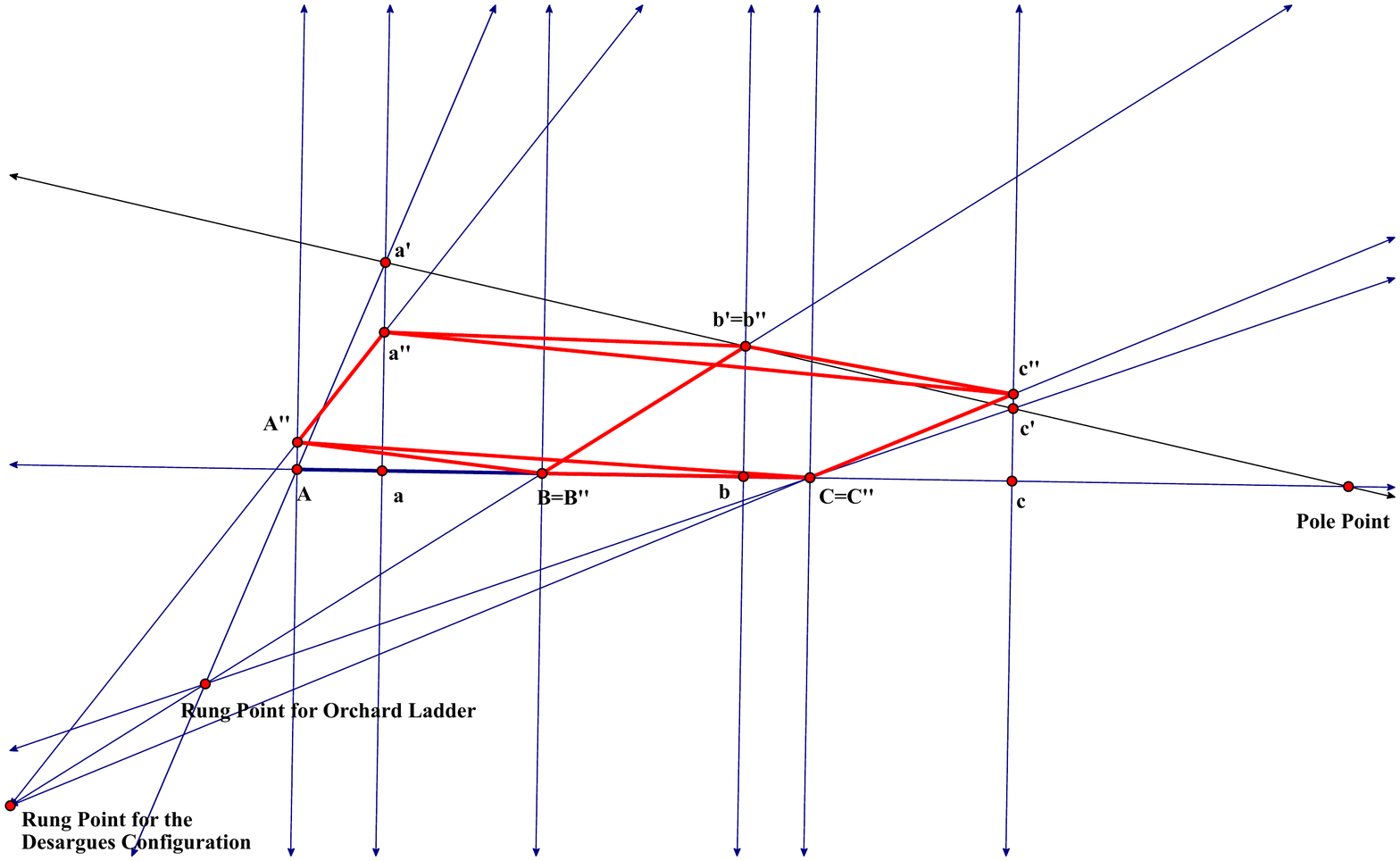}
\captionsetup{labelsep=colon,margin=1.3cm}
\caption{This shows a perturbation of the  orchard ladder in Figure \ref{Projections.fig} to get a PDF Desargues configuration projecting orthogonally into $\RR^1$.  $A,B,C,a,b,c$ in the line is replaced by Desargues configuration $A'',B,C,a'',b'',c''$ in the plane that projects onto it.}
\label{rung-motion.fig}
\end{figure}
\end{proof}

\section{Example}\label{Sect:Example}
Later in the paper, we will see how the negative discriminant implies that the structure is not universally rigid.  
In this section, we describe an explicit example, at the boundary of configurations that are universally rigid and not universally rigid.
The Framework where $(A,B,C,a,b,c)=(1,3,5,2,4,6)$ is an example of a step ladder, which is flexible in the plane. It is \defn{dimensionally rigid} in the sense of Alfakih in \cite{Alfakih-dimension}, where the framework, with given edge lengths, exists in a particular maximal dimension, which, in this case, is dimension $2$. It turns out that the step ladder is at the boundary of the universally rigid and non-universally rigid configurations, and, of course, itself is not universally rigid.  So we consider those configurations of the form $(A,B,C,a,b,c)=(1,3,5,2,4,c)$, for varying $c$.  The values of $c$, where the discriminant $D(1,3,5,2,4,c)\ge 0$ is a connected closed subset of the real projective line $\PP^1$, namely the closed interval between $10/7$ and $6$.
The end points are not super stable.  We calculate that
\[
D(1,3,5,2,4,c)=-7c^2+52c-60=(10-7c)(c - 6).
\]
So $D(1,3,5,2,4,c) >0$ if and only if $10/7<c<6$. So when $c=10/7$ we get the  configuration in Figure \ref{Projections-3.fig} in the plane projecting orthogonally onto the one in $\RR^1$, where $x=5/2$.
\begin{figure}[ht]
\centering 
\includegraphics[scale=0.6]{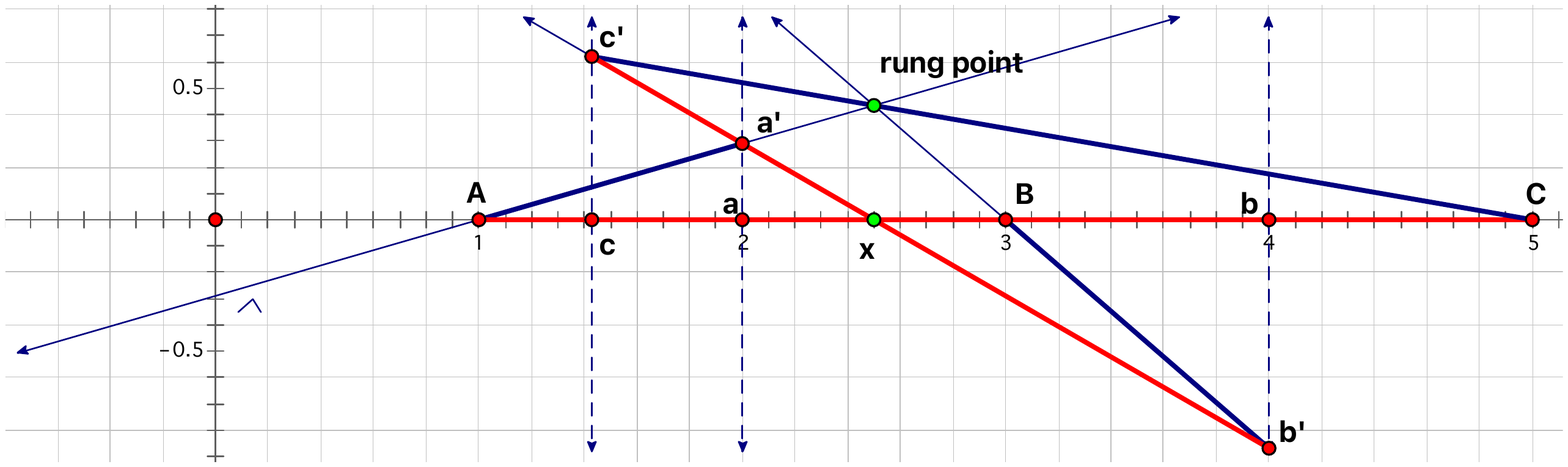}
\captionsetup{labelsep=colon,margin=1.4cm}
\caption{This shows a critical orchard ladder  in the plane which is universally rigid, so its projection in the line is also universally rigid. But after a small perturbation one way in the line, it becomes not universally rigid, and after a small perturbation the other way it becomes super stable in the line.  (The red members are the poles.  See Figure \ref{Tensegrity-ladders.fig}(e), below.) The point $x$ is not one of the vertices of the framework.  It is the intersection of the two poles.}
\label{Projections-3.fig}
\end{figure}

One can calculate independently that when $c<10/7$ (and all the other parameters the same), then the configuration in the
line flexes in $\RR^3$ then back down into the plane in a different configuration.

\section{Tensegrities}\label{Sect:Tensegrity}

First we notice that when the graph $G$ is the ladder graph, and  the bar framework $(G,\p)$ is super stable, in $\RR^2$, with no zero length members, then the rung lines intersect at a point.  Furthermore, it has a stronger property, where $(G,\p)$ is super stable as a tensegrity.  This is where some of the edges, designated as \defn{cables} (drawn with dashed lines) are only required to not increase in length, and other edges designated as \defn{struts} (drawn with thick solid lines) are only required to not decrease in length.  Figure \ref{Tensegrity.fig}  shows some examples of such super stable Desargues tensegrities.

\begin{figure}[ht]
\centering 
\includegraphics[scale=0.7]{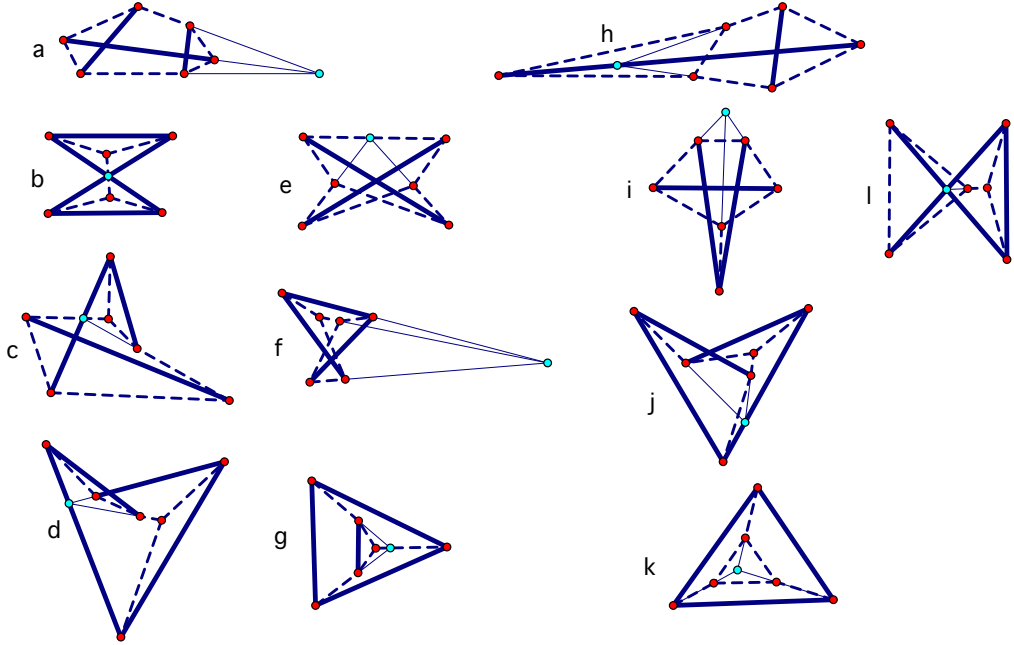}
\captionsetup{labelsep=colon,margin=1.3cm}
\caption{This shows super stable tensegrity frameworks of the Desargues graph in various configurations.  Solid heavy edges are struts, and dashed edges are cables.  Vertices are red points, while the blue points show where the extensions of the rung lines, in the thin solid lines, intersect.  These are all the possibilities for strut/cable patterns for the universally rigid Desargues graph.  There are two projective equivalence classes, {a \dots g} and h \dots k.  See the Addendum.}
\label{Tensegrity.fig}
\end{figure}

When the triangles in Figure \ref{Tensegrity.fig} are such that they  lie in a line, we get the  configurations of tensegrity ladder configurations in Figure \ref{Tensegrity-ladders.fig}.  For example, Figure \ref{Tensegrity-ladders.fig}(e) is the form of Figure \ref{Projections-3.fig}.

\begin{figure}[ht]
\centering 
\includegraphics[scale=0.6]{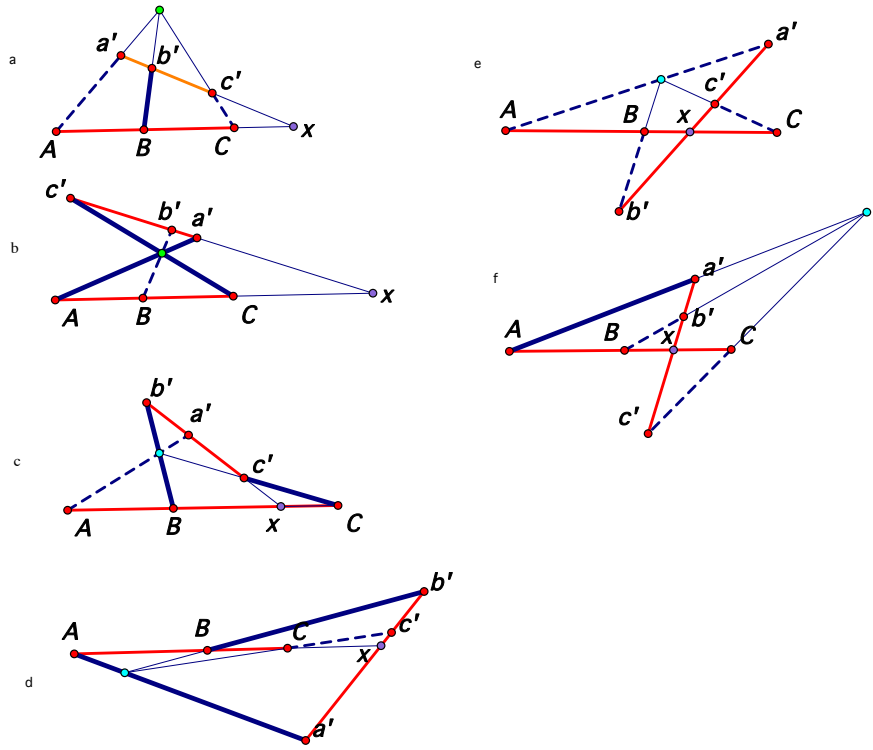}
\captionsetup{labelsep=colon,margin=1.3cm}
\caption{This shows universally rigid tensegrity ladders  where the poles, in red, are in different lines.  As in previous figures, the green points indicate the rung points, and the dark points, labeled $x$ are where the poles intersect.  Cables and struts are indicated by dashed edges and thick edges, respectively. }
\label{Tensegrity-ladders.fig}
\end{figure}
 
 In \cite{iterative} it is shown that a ladder such as in Figure \ref{Tensegrity-ladders.fig} (a) is "two-step" universally rigid.  Indeed, it is possible to show that any of the ladders in Figure \ref{Tensegrity-ladders.fig} are universally rigid, but, of course, they are not super-stable.  It is also possible to show that in each of these cases, they are universally rigid as tensegrities.  In other words when the rung lines meet at a point and the secondary stress signs are as in Figure \ref{Tensegrity-ladders.fig} (cable stresses positive and strut stresses negative), then the ladder is universally rigid.  
 
 It is interesting to show what happens in general when we vary one of the coordinates, $A,B,C,a,b,c$ that define the ladder as above in this section.  
 
 \begin{theorem} Let $\p=(A,B,C,a,b,c)$ be the coordinates that define a ladder $(G,\p)$ in $\RR^1$ where no edge has zero length.  Then the set $\{t|D(A,B,C,a,b,t)\ge0\}$ is either a closed interval of non-zero length, or the complement of an open interval of non-zero length, a closed ray $\{ t\ge  t_0\}$ or $\{ -t\ge  t_0\}$, all of $\RR^1$, or the empty set.
 \end{theorem}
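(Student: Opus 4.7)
The approach is to recognize that $\mathcal{A}(t)$, $\mathcal{B}(t)$, and $\mathcal{C}(t)$, obtained from (\ref{eqn:dis-1})--(\ref{eqn:dis-3}) by replacing $c$ with $t$, are each affine in $t$. Consequently $D(t) = \mathcal{B}(t)^2 - 4\mathcal{A}(t)\mathcal{C}(t)$ is a polynomial in $t$ of degree at most $2$. Writing $D(t) = c_2 t^2 + c_1 t + c_0$ and letting $\Delta := c_1^2 - 4 c_2 c_0$ denote its discriminant in $t$, a routine case analysis on the signs of $c_2$ and $\Delta$ produces every shape listed in the theorem except one: if $c_2 < 0$ and $\Delta = 0$ simultaneously, then $D(t) = c_2(t - t_0)^2$ and $\{t : D(t) \ge 0\}$ would be the single point $\{t_0\}$, which is not among the allowed outcomes. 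The main content of the proof is thus to show that this case cannot arise.

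To rule it out, the plan is to compute $\Delta$ in closed form and exhibit it as a product of obviously non-zero factors. View $D(t)$ as the discriminant in $x$ of the binary quadratic pencil $F_0(x) + t F_1(x)$, where $F_0$ and $F_1$ are the $t^0$ and $t^1$ coefficients of the left-hand side of (\ref{eq: ratio}) once $c$ is replaced by $t$. Writing $F_0 = p x^2 + q x + r$ and $F_1 = P x^2 + Q x + R$, so that $c_2 = Q^2 - 4PR$, $c_1 = 2qQ - 4(pR + rP)$, $c_0 = q^2 - 4pr$, a direct expansion (the cross term in $c_1^2$ combines with the $Q^2 pr + PRq^2$ part of $4c_2 c_0$ through the identity $Q^2 pr + PRq^2 - qQ(pR + rP) = (pQ - Pq)(Qr - qR)$) yields the coordinate-free identity
\begin{equation*}
c_1^2 - 4 c_2 c_0 = 16\bigl[(pR - rP)^2 + (pQ - Pq)(Qr - qR)\bigr].
\end{equation*}

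The main obstacle is the concrete simplification. Substituting the explicit formulas for $p, q, r, P, Q, R$ in terms of $(A, B, C, a, b)$, each of the three brackets $pR - rP$, $pQ - Pq$, $Qr - qR$ should be seen to carry $(C-A)(b-a)(B-C)$ as a common factor. Introducing $\alpha := A - a$ and $\beta := B - b$ to eliminate $A$ and $B$, the residual expression inside the square bracket should collapse (after expansion and cancellation of the obvious terms in $a^2\beta^2$ and $\alpha^2 b^2$) to $\alpha\beta(a-b)(A-B)$, leaving
\begin{equation*}
\Delta = 16 (C-A)^2 (b-a)^2 (B-C)^2 (A-a)(B-b)(a-b)(A-B).
\end{equation*}
Each of the seven factors is the signed length of an edge of the ladder not involving $c$, so by the no-zero-length hypothesis $\Delta \ne 0$ whenever $D$ is a genuine quadratic. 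This eliminates the single-point case; combined with the case analysis (constant $D$ gives $\mathbb{R}^1$ or $\emptyset$, linear $D$ gives a closed ray, quadratic $D$ with $c_2 > 0$ gives $\mathbb{R}^1$ or the complement of an open interval of positive length, quadratic with $c_2 < 0$ gives a closed interval of positive length or $\emptyset$), the classification matches the theorem exactly.
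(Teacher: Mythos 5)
Your proposal is correct, and I checked the key computation: with $p,q,r,P,Q,R$ as you define them one gets $pR-rP=(C-A)(b-a)(B-C)(aB-Ab)$, $pQ-Pq=(C-A)(b-a)(B-C)\bigl[(b-a)-(B-A)\bigr]$, $Qr-qR=(C-A)(b-a)(B-C)\bigl[ab(B-A)-AB(b-a)\bigr]$, and the residual bracket does collapse to $(A-a)(B-b)(A-B)(a-b)$, so your factorization $\Delta=16(C-A)^2(b-a)^2(B-C)^2(A-a)(B-b)(a-b)(A-B)$ is a genuine polynomial identity; it also specializes at $A=-1,B=0,C=1$ to $-64b(a+1)(a-b)^3$, which is exactly the formula appearing in the paper. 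The overall strategy is the same as the paper's --- view $D$ as a polynomial of degree at most two in $t$ and show its $t$-discriminant cannot vanish when all edge lengths are non-zero --- but your execution differs in two useful ways. First, the paper establishes the nonvanishing only after a projective normalization ($A=-1$, $B=0$, $C=1$) and then quotes a computed formula for $DD(a,b)$, which tacitly relies on the double-root condition being preserved under that normalization (and on nothing being pushed to infinity); your coordinate-free pencil identity $c_1^2-4c_2c_0=16\bigl[(pR-rP)^2+(pQ-Pq)(Qr-qR)\bigr]$ avoids the normalization entirely and exhibits the full factorization into edge lengths in the original coordinates, which is both more self-contained and more informative. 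Second, you make explicit which branch of the sign analysis actually requires the nonvanishing (leading coefficient negative with a double root, giving a single point), whereas the paper simply excludes all double roots; your version also records the remaining cases (constant, linear, $c_2>0$, $c_2<0$) so that the list in the statement is visibly exhaustive. In short: same approach, but a cleaner and fully general verification of the crucial discriminant-of-the-discriminant computation.
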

 \begin{proof} Regarding the $A,B,C,a,b$ as fixed, then the discriminant function $D(A,B,C,a,b,t)$ is at most degree two in $t$. 
 If the discriminant is  actually linear
 (or constant)
 in $t$, we are done. 
 Otherwise,  the 
 discriminant is degree two in $t$, then
the statement of the theorem says that the solution to 
 \[
 D(A,B,C,a,b,t)=0
 \]
 cannot be a single point.  This can only happen when the discriminant $DD$ of that quadratic function is zero.   By choosing an appropriate projective transformation, we can assume that $A=-1, B=0, C=1$, and then
 \[
 DD(a,b)=-64b(a+1)(a-b)^3.
 \]
 This is never $0$, except when $B=b=0$ or $a=-1=A$, or $a=b$, which is when one of the edge lengths of $G$ is $0$.\qedsymbol \end{proof} 
 
 \begin{corollary}\label{end-vary} If the discriminate $D(A,B,C,a,b,c)=0$ corresponds to a configuration of the ladder with non-zero edge lengths, then any one of the coordinates $A,B,C,a,b,c$ can be perturbed one way to create a configuration that is super stable with positive discriminant, and perturbed the other way to create a configuration that has discriminant negative.
 \end{corollary}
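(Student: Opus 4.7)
The plan is to deduce Corollary~\ref{end-vary} from the preceding theorem as a transversality statement: I want to show that, at any zero of the discriminant polynomial $D$ corresponding to a ladder with non-zero edge lengths, $D$ changes sign transversally along each of the six coordinate axes. Once this is established, Theorem~\ref{discriminant:thm} produces a super stable framework on the $D>0$ side of the zero, and the other side gives $D<0$.

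I would first treat the case of varying $t:=c$ with $A,B,C,a,b$ held fixed. By equations~(\ref{eqn:dis-1})--(\ref{eqn:dis-3}), each of $\cA,\cB,\cC$ is linear in $t$, so $D=\cB^2-4\cA\cC$ has degree at most two in $t$. The preceding theorem's proof computes, after the projective normalization $A=-1,B=0,C=1$, the discriminant $DD(a,b)=-64\,b(a+1)(a-b)^3$ of $D$ viewed as a quadratic in $t$; the three linear factors vanish precisely when one of the edges $Bb$, $Aa$, $ab$ has zero length, so under the non-zero-edge hypothesis $DD\ne 0$. Hence, when $D$ is actually quadratic in $t$, its two roots are distinct and the root at $t=c$ is simple. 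When the $t^2$-coefficient of $D$ vanishes (so $D$ is at most linear in $t$), a short inspection of the explicit formulas (\ref{eqn:dis-1})--(\ref{eqn:dis-3}) shows that the $t^1$-coefficient cannot also vanish without forcing a zero-length edge; hence $D$ is not identically zero in $t$ and the root at $t=c$ is again simple. A simple root at $t=c$ means $\partial D/\partial c\ne 0$, so perturbing $c$ in one direction yields $D>0$ (and super stability via Theorem~\ref{discriminant:thm}) and in the other yields $D<0$.

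To finish, I would observe that the same argument applies when any of the other five coordinates is treated as the varying parameter. Each of $\cA,\cB,\cC$ in (\ref{eqn:dis-1})--(\ref{eqn:dis-3}) is linear in each of the six ladder coordinates individually, so $D$ is of degree at most two in each. After a projective transformation of the line putting any three convenient coordinates in normalized positions, the same $DD$-style computation yields a polynomial in the remaining two coordinates whose zero locus again corresponds exactly to zero-length edges, and the degenerate linear-coefficient check goes through identically.

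The main obstacle is precisely the final step in the $t=c$ analysis: the preceding theorem's list of possible sets $\{D\ge 0\}$ includes all of $\RR^1$, which a priori is compatible either with $D$ having a double root (ruled out by $DD\ne 0$) or with $D$ being identically zero as a polynomial in $t$. Ruling out the latter is where the brief direct check of the $t^1$-coefficient of $D$ is needed; once in place, the sign-change conclusion follows in every coordinate direction and the corollary is immediate.
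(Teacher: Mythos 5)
Your proposal is correct and is essentially the paper's own argument: the paper likewise reduces the corollary to the preceding theorem, whose proof is exactly the computation $DD(a,b)=-64b(a+1)(a-b)^3\neq 0$ for configurations with non-zero edge lengths (so the given zero of $D$ in each coordinate direction is a simple root, i.e.\ an endpoint of a region where $D>0$), with super stability on the positive side supplied by Theorem~\ref{discriminant:thm}. The only remark is that your separate sub-quadratic check is redundant: writing $D=D_2t^2+D_1t+D_0$, the quantity $DD=D_1^2-4D_2D_0$ vanishes whenever $D_2=D_1=0$, so $DD\neq 0$ already rules out both a double root and the possibility that $D$ is constant (in particular identically zero) in $t$.
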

 \begin{proof} 
 Suppose $D(A,B,C,a,b,c)=0$, and  one varies, say $c$, fixing the others.
From the theorem, $c$ must be one of the 
endpoints of a region of $c$ values where
the disriminant is positive. \qedsymbol
 \end{proof}
 
 It is possible to have configurations $\p$, such that when one variable changes, fixing all the others (with no zero edge lengths), all the configurations are super stable. For example, that happens when the $A,B,a,b$ coordinates form a stretched cycle as in Theorem \ref{thm:stretched}.  On the other hand, a calculation shows that it is not possible to have configuration $\p$ such that for all values of one variable the frameworks are not universally rigid.
 
 \section{Negative Discriminant}\label{Sect:negative-discriminant}
 
 We now know that when the discriminant $D(A,B,C,a,b,c) > 0$, then the corresponding configuration in the line is the orthogonal projection of a ladder in the plane and it is not just  universally rigid, but it is super stable by Theorem \ref{discriminant:thm}.  We next want to show that when $D(A,B,C,a,b,c)< 0$, then the corresponding framework in the line is not universally rigid. 
 To this end, we will first show that if the
discriminant $D(A,B,C,a,b,c) \le 0$, then
the ladder framework in $\RR^1$ is not super stable.

When the ladder in the line is such that one rung contains another in reverse order such as $A<B>b>a>A$ or $A<C>c>a>A$ or $B<C>c>b>B$ or with all the inequalities reversed, we say any of these is a \defn{stretched cycle}.  It is not hard to see that if the ladder $(G,\p)$ has a stretched cycle, then it is super stable, but we can say a bit more:

\begin{theorem}\label{thm:stretched} If a ladder in the line $(G,\p)$ has a stretched cycle, then  $D(A,B,C,a,b,c) > 0$ and $(G,\p)$ is super stable in the line.
\end{theorem}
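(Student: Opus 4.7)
The plan is to show $D(A,B,C,a,b,c) > 0$ directly; super stability then follows from Theorem~\ref{discriminant:thm}. Recall from Section~\ref{Sect:discriminant} that $D$ is the discriminant of the quadratic $f(x) := (B-x)(C-A)(b-a)(c-x) - (b-x)(c-a)(B-A)(C-x)$ coming from Theorem~\ref{thm:ratio}, so $D > 0$ is equivalent to $f$ taking both signs on $\RR$. I would establish this sign-change property, not by computing $D$ itself, but by evaluating $f$ at two carefully chosen vertex coordinates.

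First I would do the routine computation of $f$ at each of $A, B, C, a, b, c$. At each such point, one of the two bracketed products in the definition of $f$ vanishes, and what remains factors as a product of four signed differences; for instance, $f(A) = (B-A)(C-A)(a-A)(b-c)$ and $f(b) = (B-b)(C-A)(b-a)(c-b)$, with analogous identities at the four remaining coordinates. The key observation is then to pair each stretched 4-cycle with its two \emph{diagonally opposite} vertices: $\{A, b\}$ for the cycle $A{-}a{-}b{-}B{-}A$, $\{A, c\}$ for $A{-}a{-}c{-}C{-}A$, and $\{B, c\}$ for $B{-}b{-}c{-}C{-}B$. When I multiply the two factored values obtained this way, the coordinates not participating in the stretched 4-cycle appear only inside squares. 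For the stretched cycle $A < a < b < B$, for example,
\[
f(A)\,f(b) \;=\; -(B-A)(B-b)(a-A)(b-a) \cdot (C-A)^2 \cdot (b-c)^2,
\]
and every parenthesized factor is strictly positive under the stretched-cycle hypothesis and the no-zero-length condition, so $f(A)\,f(b) < 0$ and $f$ takes both signs on $\RR$. The other two stretched-cycle forms give an analogous factorization in which the single square absorbs the coordinates not constrained by the hypothesis; the reversed forms flip the signs of all four linear factors simultaneously and so preserve the overall sign.

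The main obstacle is spotting the diagonal-pair trick: it is precisely this choice that forces the two ``free'' coordinates into squared factors, so the sign of the product depends only on the differences coming from the stretched cycle itself. A direct analysis of the 85-term polynomial $D$, or of generic sign patterns of $f$ at other points, would require a delicate case split on the positions of the two coordinates the hypothesis leaves unpinned.
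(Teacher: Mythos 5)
Your argument is correct, and it takes a genuinely different route from the paper's. The paper fixes the four stretched-cycle vertices, normalizes projectively (e.g.\ $A=1$, $a=-1$, $b=-B$), and then appeals to a Maple computation that the resulting discriminant, viewed as a function of the two remaining coordinates, has minimum zero; super stability is then obtained by adding a PSD triangle stress (or by an argument like that of Theorem~\ref{thm:ladder-nonUR}). You instead evaluate the quadratic $f(x)=\mathcal{A}x^{2}+\mathcal{B}x+\mathcal{C}$ of Theorem~\ref{thm:ratio} at the two diagonal vertices of the stretched cycle. Your identities check out: $f(A)=(B-A)(C-A)(a-A)(b-c)$ and $f(b)=(B-b)(C-A)(b-a)(c-b)$, so $f(A)f(b)=-(B-A)(B-b)(a-A)(b-a)(C-A)^{2}(b-c)^{2}<0$ when $A<a<b<B$, and the pairings $\{A,c\}$ and $\{B,c\}$ do the same for the other two cycle types (for instance $f(B)f(c)=(b-B)(C-B)(b-c)(C-c)(c-a)^{2}(B-A)^{2}<0$ when $B<b<c<C$), with reversed inequalities flipping an even number of linear factors. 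Two points are worth spelling out in a write-up: first, the step ``$f$ takes both strict signs, hence $D>0$'' should note the degenerate case $\mathcal{A}=0$, where a sign change forces $\mathcal{B}\neq 0$ and $D=\mathcal{B}^{2}>0$; second, citing Theorem~\ref{discriminant:thm} for super stability is legitimate and not circular, since its proof is independent of Theorem~\ref{thm:stretched}, and it is arguably cleaner than the paper's stress-addition remark. What your route buys is a short, computer-free, self-contained verification that also makes explicit exactly which nonzero edge lengths are used (the edges $AC$ and $bc$, resp.\ $AB$ and $bc$, resp.\ $AB$ and $ac$); what the paper's route buys is alignment with its projective-normalization machinery, at the cost of relying on symbolic computation and leaving the boundary analysis implicit.
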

 
 This can be seen by looking at the definition of the discriminant, fixing the four vertices of the stretched cycle, and then considering the polynomial in terms of the other two variables.  That polynomial is quadratic in each variable and discriminant of each one is negative.  Indeed, when $A,B,b,a$ is a stretched cycle, one can use a projective transformation so that 
 $A=1, B\ge 0, a=-1, b=-B$
 while $C$ and $c$ are arbitrary. Maple verifies that the minimum of the discriminant $D(1,B,C,-1,-b,c)$ is zero when $B\ge 0$.  Using this stress and adding a PSD stress from the one of the triangles implies that $(G,p)$ is superstable.
 Alternatively, one can use an argument similar to Theorem \ref{thm:ladder-nonUR}.
 
\begin{lemma}
\label{lem:replace}
Let $(G,\p)$ be  a bar framework in $\RR^1$, with more than
one edge, that is universally rigid.
Then we can replace each of the bars by one of either a strut or a  cable, and it still will be universally rigid
as a tensegrity framework.
\end{lemma}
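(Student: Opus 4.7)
The plan is to derive a cable/strut labeling of $G$ from PSD equilibrium stresses of $(G,\p)$, verify that the tensegrity constraints force the stress inequalities to be tight, and then invoke universal rigidity of the bar framework to conclude congruence.

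First, I would produce a finite list $\omega^{(1)},\ldots,\omega^{(k)}$ of PSD equilibrium stresses of $(G,\p)$ whose joint kernel is spanned by $\p$ and $\vecone$ and whose supports cover every edge of $G$ (edges carrying no stress in any $\omega^{(l)}$ can be deleted without losing universal rigidity, so their labels are irrelevant). Since $(G,\p)$ is universally rigid, and hence dimensionally rigid, in $\RR^1$, a non-trivial PSD equilibrium stress exists in the sense of Alfakih \cite{Alfakih-dimension}. Because $(G,\p)$ need not itself be super stable (witness Lemma \ref{lem:D=0}), further stresses are obtained via the iterative construction of \cite{iterative}. I would then assign each edge $ij$ a sign from this list: if two of the $\omega^{(l)}$ disagreed in sign on $ij$, a suitable non-negative combination would yield a PSD equilibrium stress vanishing on $ij$, so iterating this elimination one can arrange that each edge receives a well-defined sign. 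Label $ij$ a cable when this common sign is positive and a strut when it is negative.

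To verify universal rigidity of the resulting tensegrity, suppose $\q\in\RR^D$ satisfies the cable/strut constraints. Sign consistency and the direction of the tensegrity inequalities yield $\sum_{ij}\omega^{(l)}_{ij}(\|q_i-q_j\|^2-\|p_i-p_j\|^2)\le 0$ for each $l$, where the sum is over edges. Combined with the standard identity $\sum_{ij}\omega^{(l)}_{ij}\|q_i-q_j\|^2=\trace(\trans{Q}\Omega^{(l)}Q)\ge 0$ and the equilibrium relation $\sum_{ij}\omega^{(l)}_{ij}\|p_i-p_j\|^2=0$, this forces equality in the tensegrity constraints on the support of each $\omega^{(l)}$ and places the columns of $Q$ in $\kernel\Omega^{(l)}$. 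Ranging $l$ over the full list makes every relevant edge length correct and confines $Q$ to a one-dimensional affine image of $\p$, whereupon universal rigidity of $(G,\p)$ as a bar framework gives $\q\cong\p$.

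The main obstacle I expect is the sign-consistency step. For super stable $(G,\p)$ a single PSD stress of maximum rank suffices and its signs are automatic, but in orchard-type cases no such single stress exists, and care is needed to ensure that the iterative family can be chosen with compatible signs on every edge. A promising route is to work with the extreme rays of the PSD equilibrium stress cone, or first to pass to a minimally universally rigid spanning subframework whose PSD stress space is just rich enough to assign each edge its sign unambiguously.
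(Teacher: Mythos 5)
There is a genuine gap, and it sits at the very first step of your plan rather than at the sign-consistency step you flag at the end. You assume you can produce a finite list of PSD equilibrium stresses $\omega^{(1)},\ldots,\omega^{(k)}$ of $(G,\p)$ whose joint kernel is $\mathrm{span}\{\p,\vecone\}$. But the PSD equilibrium stresses of a fixed framework form a convex cone, so a relative-interior element of that cone already has kernel equal to the intersection of the kernels over the whole cone (and support containing every edge that is stressed in any PSD stress). Hence the existence of your list is equivalent to the existence of a single PSD stress of rank $n-2$, i.e.\ to super stability of $(G,\p)$ in $\RR^1$ (the conic-at-infinity condition being vacuous on the line). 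That is exactly what cannot be assumed: the lemma must cover the universally rigid but non--super-stable ladders of Lemma \ref{lem:D=0}, for which no such family exists. Your fallback, ``further stresses are obtained via the iterative construction of \cite{iterative},'' does not repair this, because the higher-level stresses in that construction are \emph{not} PSD as matrices --- they are PSD only on the faces (flats) determined by the previous levels --- so your verification step, which needs $\trace(\trans{Q}\Omega^{(l)}Q)\ge 0$ for each $l$ separately, does not go through; it would have to be run sequentially, restricting $Q$ level by level, at which point you are essentially re-proving Theorem 12.1 of \cite{iterative} rather than quoting a property of honest PSD stresses. For the same reason your proposed fixes (extreme rays of the PSD stress cone, or a minimally universally rigid spanning subframework) cannot help: they cannot enlarge the joint kernel beyond that of a maximal-rank PSD stress. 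A secondary unjustified claim is that edges unstressed by every $\omega^{(l)}$ may be deleted without losing universal rigidity; the paper's own Section \ref{sect:definitions} example shows that stress-free edges can be indispensable, so the safe move is to label such edges arbitrarily and handle them at the end, not to delete them. (Your final appeal to ``universal rigidity of $(G,\p)$ as a bar framework'' is also slightly off, since only the stressed edges have their lengths forced; but once $\q$ is an affine image of a line this is easily patched, because the map is a dilation and one preserved edge length forces ratio $1$.)

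The paper's proof takes the route you would be forced into anyway: it invokes the iterated PSD stress sequence of \cite{iterative} (Corollary 8.1.2), uses the key fact that once an edge acquires a non-zero coefficient its sign can be maintained at all later levels (proof of Theorem 12.1 there), labels cables and struts by those signs, gets dimensional rigidity of the resulting tensegrity directly from Theorem 12.1, and then upgrades to universal rigidity by the affine-motion argument in $\RR^1$: with more than one edge the certificate supplies at least one cable and one strut, which kills every non-congruent affine motion. Your closing trace computation is the right mechanism for the super stable case, but by itself it cannot reach the non--super-stable frameworks that are the whole point of this lemma.
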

\begin{proof}
To outline the basic idea of the proof, let us 
assume, for starters, that $(G,\p)$ is super stable. 
Next, replace each bar that has a positive stress coefficient with a cable and each bar that has a negative
stress with a strut. 
Replace unstressed bars with either a cable or strut,
it does not matter.
The proper stress is a certificate that 
our new tensegrity framework is dimensionally rigid. 
To upgrade from dimensional rigidity to universal rigidity
we just have to demonstrate that there is no
affine motion in $\RR^1$ that preserves the tensegrity 
constraints. In $\RR^1$, this is ensured by the
fact that we have at least one strut and one cable,
(which is insured by our assumed stress and the fact that
we have more than one edge).

Unfortunately, in this lemma, we don't assume super stability, so we
do not have the above stress to work with. 
Fortunately, we can replace the single stress with an
``iterated stress sequence'' as described in~\cite{iterative}.
In particular, 
 Corollary 8.1.2 in  \cite{iterative},  says that
 when a bar framework $(G,\p)$ is universally rigid, there is a sequence of iterated
 PSD stresses generalizing the 
 notion of a stress as defined here. Moreover, 
 once an edge appears with a non-zero coefficient,
 its sign can be maintained in each subsequent iteration level. 
 (See Proof of Theorem 12.1 in \cite{iterative}.)
 As in the super stable case above, we  replace each bar that has a positive coefficient in the iterated
 stress with a cable and each bar that has a negative
coefficient in the iterated stress
with a strut. 
Replace unstressed bars with either a cable or strut. 
It does not matter.
We now have a tensegrity with
an iterated proper PSD stress. 
Again from  Theorem 12.1 in \cite{iterative}, this stress
is a certificate of the dimensional rigidity of our tensegrity 
framework. 
 
Just like above, to upgrade from dimensional rigidity to universal rigidity
we just have to demonstrate that there is no
affine motion in $R^1$ that preserves the tensegrity 
constraints. In $\RR^1$, this is ensured by the
fact that we have at least one strut and one cable. \qedsymbol
\end{proof}

\begin{lemma}
\label{lem:haspsd}
Suppose that $(G,\p)$ is a framework. Suppose that 
we cannot find a framework (possibly in higher dimension) where 
one of its edges is shorter, and the rest of its edges have the
original length. Then $(G,\p)$ must have a non-zero
PSD equilibrium stress.
\end{lemma}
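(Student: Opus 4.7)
The plan is to reformulate the hypothesis as a boundary condition on the convex set of attainable squared-edge-length vectors, and to extract the required stress from a supporting hyperplane. For each edge $e'$ of $G$ joining vertices $i$ and $j$, let $L_{e'}$ be the symmetric $n\times n$ matrix whose $(i,i)$ and $(j,j)$ entries are $+1$, whose $(i,j)$ and $(j,i)$ entries are $-1$, and which vanishes elsewhere, so that $\langle L_{e'}, X\rangle = X_{ii}+X_{jj}-2X_{ij}$ equals the squared length of edge $e'$ whenever $X = QQ^\top$ is the Gram matrix of a configuration with rows $q_1,\ldots,q_n$. Every symmetric positive semidefinite $X$ factors this way in some ambient dimension (by Cholesky), so realizations of $G$ in any dimension correspond bijectively to points of the PSD cone. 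Let $P$ be the matrix whose rows are $p_1,\ldots,p_n$, let $X_0 = PP^\top$, and set $c_{e'} = \langle L_{e'}, X_0\rangle$. The lemma's hypothesis for the distinguished edge $e$ then reads: the point $v_0 = (c_{e'})_{e'\in E}$ lies on the boundary of the convex set $F = \{(\langle L_{e'}, X\rangle)_{e'\in E} : X\succeq 0\}\subseteq\RR^{|E|}$, since the open ray $v_0 - t\mathbf{e}_e$ for $t>0$ misses $F$.

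Next, I would apply the supporting hyperplane theorem at $v_0$ to obtain a nonzero vector $\omega = (\omega_{e'})_{e'\in E}$ satisfying $\sum_{e'}\omega_{e'}\langle L_{e'}, X\rangle \ge \sum_{e'}\omega_{e'} c_{e'}$ for every $X\succeq 0$. Writing $\Omega = \sum_{e'}\omega_{e'}L_{e'}$, which is exactly the stress matrix of $\omega$ in the paper's notation, the inequality reads $\langle\Omega, X\rangle \ge \langle\Omega, X_0\rangle$ on the PSD cone. Scaling any $X_1\succeq 0$ by $\lambda\to\infty$ forces $\langle\Omega, X_1\rangle \ge 0$, so $\Omega$ is PSD; taking $X=0$ gives $\langle\Omega, X_0\rangle \le 0$, and combined with $\langle\Omega, X_0\rangle \ge 0$ (from both matrices being PSD) this yields the complementary slackness identity $\langle\Omega, X_0\rangle = 0$. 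Thus $\Omega$ is a nonzero PSD stress matrix orthogonally paired with $X_0$.

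To finish, I would turn complementary slackness into the equilibrium condition. From $\langle\Omega, X_0\rangle = 0$ with both $\Omega$ and $X_0$ PSD the standard fact gives $\Omega X_0 = 0$, i.e., $\Omega PP^\top = 0$. Writing $M = \Omega P$, each row of $M$ is a linear combination of rows of $P$, hence lies in the row span of $P$; simultaneously $MP^\top = 0$ forces every row of $M$ to be orthogonal to every row of $P$, so $M = 0$. The $i$th row of $M = \Omega P$ computes as $\sum_{j\neq i}\omega_{ij}(p_i - p_j)$, which is precisely the equilibrium condition at vertex $i$. The one delicate point is justifying the supporting hyperplane at $v_0$ without invoking a Slater-type constraint qualification, but this is immediate: the hypothesis itself exhibits $v_0$ as a relative-boundary point of $F$ in a specific direction, so any convex set missing a one-sided neighborhood of a candidate point admits a supporting hyperplane there, and the remaining steps (PSD-ness from positive homogeneity of the cone, complementary slackness, and the row-span-versus-orthogonal-complement argument) are then routine.
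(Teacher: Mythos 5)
Your proof is correct, but it follows a genuinely different route from the paper. The paper argues by contraposition and cites a known result of Alfakih: if $(G,\p)$ has no non-zero PSD equilibrium stress, then $G$ has a realization with the given edge lengths whose affine span is $(n-1)$-dimensional, and in such a maximal-span realization any single edge can be lengthened or shortened while all other edge lengths are held fixed, contradicting the hypothesis. You instead prove the statement directly and from first principles: you pass to Gram matrices, note that the hypothesis places the vector of squared edge lengths of $\p$ outside the interior of the convex cone of achievable squared-length vectors, take a supporting hyperplane there, and read off from conic duality that the resulting coefficient vector gives a symmetric matrix $\Omega$ that is PSD, non-zero (the matrices $L_{e'}$ are linearly independent, so $\omega\neq 0$ forces $\Omega\neq 0$), and satisfies $\langle\Omega,PP^\top\rangle=0$, whence $\Omega P=0$, which is exactly the equilibrium condition with the paper's sign conventions. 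Your handling of the delicate points (no Slater condition needed, only non-interiority; no closedness of the image cone needed for the supporting hyperplane; complementary slackness giving $\Omega X_0=0$ and then $\Omega P=0$ via the row-span argument) is sound. What each approach buys: the paper's proof is a two-line reduction to the literature and emphasizes the geometric picture that stress-free frameworks admit realizations flexible in all edge directions, while yours is self-contained, makes the semidefinite-programming duality underlying that cited result explicit, and produces the PSD stress constructively as a dual certificate of the length-minimization at $\p$ — which is in fact the role the lemma plays in the proof of Theorem \ref{thm:ladder-nonUR}.
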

\begin{proof}
Suppose that $(G,\p)$ does not have a non-zero PSD stress matrix.
Then from~\cite{alfakih2011bar}, there must be a framework of
$G$ with an $n-1$ dimensional affine span. 
In this high dimensional 
framework, we can make any edge longer or shorter, at
will, while maintaining all of the other edge lengths. \qedsymbol
\end{proof}

The main result is the following. 

\begin{theorem}\label{thm:ladder-nonUR} Suppose $(G,\p)$ is a framework in $\RR^1$ of the ladder graph
that is universally rigid. Then for any $\epsilon >0$ there is a super stable  framework $(G,\hat{\p})$ of the ladder graph in $\RR^1$ with $||\p-\hat{\p}||<\epsilon$ and the discriminant 
$\hat{\p}$ 
is $D > 0$. 
\end{theorem}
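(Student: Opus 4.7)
The plan is to split on the sign of the discriminant $D(\p)$. If $D(\p) > 0$, Theorem~\ref{discriminant:thm} directly gives super stability of $(G,\p)$, so I take $\hat\p = \p$. If $D(\p) = 0$, then by Corollary~\ref{end-vary} I may perturb any single coordinate arbitrarily slightly in the direction that makes $D$ strictly positive, and Theorem~\ref{discriminant:thm} again certifies super stability of the perturbed framework. The real content of the theorem is thus to rule out the case $D(\p) < 0$ under the hypothesis of universal rigidity.

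For the case $D(\p) < 0$, I would argue by contradiction. Assume $(G,\p)$ is universally rigid. Then Lemma~\ref{lem:haspsd} produces a non-zero PSD equilibrium stress $\omega$, and Lemma~\ref{lem:replace} together with the iterated-PSD stress machinery of \cite{iterative} upgrades this to a certificate of dimensional rigidity of an associated tensegrity with consistent strut/cable signs. I would then analyze the $4$-dimensional equilibrium-stress space of the line ladder explicitly, aiming to prove the key claim that every non-zero PSD equilibrium stress on $(G,\p)$ is obtained by projection (in the sense of Lemma~\ref{lem:max-rank}) from a stress on a ladder in $\RR^2$. Since a $\RR^2$ lift of $(G,\p)$ exists only when $D(\p) \geq 0$ by Theorem~\ref{thm:newRatio}, this yields the desired contradiction.

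The main obstacle is establishing that $D(\p) < 0$ excludes all non-zero PSD equilibrium stresses. One route is algebraic: choose a basis for the stress space (for instance, the three triangle stresses together with a rung-stress parameter), write down $\Omega(\omega)$ as a $6 \times 6$ symmetric matrix whose entries are linear in these parameters with coefficients depending polynomially on $A,B,C,a,b,c$, and reduce the PSD condition to an explicit inequality that factors through $D$. A more geometric alternative is to exhibit a flex directly: in any higher-dimensional realization each of the colinear triples $A,B,C$ and $a,b,c$ must remain colinear (since the $\RR^1$ triangle inequalities are saturated on each triple), but the two supporting lines may tilt against each other in $\RR^3$; the claim would then be that the three rung-length constraints admit a one-parameter family of such configurations non-congruent to $\p$ precisely when $D(\p) < 0$. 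The example in Section~\ref{Sect:Example} with $c < 10/7$, which flexes explicitly into $\RR^3$, is consistent with this picture and suggests a general construction, although uniformly handling every sign pattern of $(A,B,C,a,b,c)$ with $D(\p) < 0$ is the delicate step.
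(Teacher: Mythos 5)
Your handling of the cases $D(\p)>0$ and $D(\p)=0$ is fine and matches the ingredients the paper itself uses (Theorem~\ref{discriminant:thm} and Corollary~\ref{end-vary}), but the entire content of the theorem sits in the case $D(\p)<0$, and there your argument has a genuine gap. The algebraic route you propose rests on a false key claim: for the ladder in $\RR^1$ the two collinear triples $A,B,C$ and $a,b,c$ each carry a rank-one PSD equilibrium stress (take $\Omega=\lambda\trans{\lambda}$ with $\lambda$ the affine dependence of the three collinear points, extended by zeros), so a non-zero PSD equilibrium stress exists for \emph{every} line configuration regardless of the sign of $D$. Hence ``$D<0$ excludes all non-zero PSD equilibrium stresses'' cannot be proved, and ``every non-zero PSD stress is a projection of a stress on a ladder in $\RR^2$'' is simply false; Lemma~\ref{lem:haspsd} applied at $\p$ gives no contradiction. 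Worse, even if you sharpened the claim to exclude \emph{maximal-rank} PSD stresses, that would only show $(G,\p)$ is not super stable, which does not rule out universal rigidity: the $D=0$, $\mathcal{A}\neq 0$ frameworks are universally rigid but not super stable (Lemma~\ref{lem:D=0}, Theorem~\ref{cor:wrap-up}(3)), so universal rigidity of the line ladder can be certified only by iterated stresses, not by a single PSD stress. Your geometric alternative (an explicit flex through $\RR^3$ when $D<0$) could work and is what Section~9 and the example suggest, but you acknowledge it is not carried out, and that uniform construction is precisely the missing hard step.

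The paper's proof evades both obstacles differently: it does not case-split on $D(\p)$ at all. Using Lemma~\ref{lem:replace} it turns one rung into a cable, lifts to a nearby configuration $\q$ in $\RR^5$ whose triangles are deliberately \emph{non}-collinear (killing the degenerate triangle stresses), minimizes the length of that rung subject to the other bar lengths, and uses universal rigidity plus compactness to keep the minimizer $\q^*$ within $\epsilon$ of $\p$. Lemma~\ref{lem:haspsd} is applied at $\q^*$, where a sign analysis forces either a stretched cycle (handled by Theorem~\ref{thm:stretched}) or a super stable planar Desargues configuration; flattening the triangles in a limit then produces a two-dimensional ladder lift of a line configuration near $\p$, so $D\ge 0$ nearby by Theorem~\ref{thm:newRatio}, and Corollary~\ref{end-vary} gives the required nearby $\hat\p$ with $D(\hat\p)>0$. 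If you want to salvage your outline, you would need either to carry out that kind of perturbation-to-nondegeneracy-and-limit argument, or to construct the $\RR^3$ flex for arbitrary $D<0$ configurations; as written, the $D<0$ case is not proved.
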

\begin{proof} 
From Lemma \ref{lem:replace} we can 
pick one of the rungs in the ladder 
and repleace it with a cable or strut.  Say it is a cable.  The case of a strut is similar.

Choose a  framework  $(G,\q)$
in $\RR^5$, close to $\p$, such that the two triangles each are not collinear.   Fix all those edges as bar lengths, except one rung length $r$.  Among all those configurations with those fixed lengths, let $\q^*$ be one with the with the smallest length of $r$. Since the bar graph $G$, minus the one rung, is connected, a configuration $\q^*$ with the minimum length of $r$ must exist.  

Furthermore, 
we claim that for $\q$ chosen sufficiently close to $\p$,
there must be a rung-length-minimizing configuration $\q^*$ such $||\p-\q^*||<\epsilon$.
Suppose there was not. 
Then we could choose a 
sequence of $\q(i)$s that converged to $\p$,
and minimize the rung length to get  sequence of $\q(i)^*$s.
By compactness, there would be a 
convergent subsequence with  the limit configuration,
$\bar{\q}^*$.
The configuration $\bar{\q}^*$
must have bar lengths equal to those of $\p$, and rung
length less than or equal to its length in $\p$.
Meanwhile because we have assumed that we cannot
find a $\q^*$ that is within $\epsilon$ of $\p$, then
$\bar{\q}^*$ cannot be congruent to $\p$.
This $\bar{\q}^*$ then would violate the universal rigidity of our tensegrity $\p$.

Because $\q^*$ is an optimum of our minimization problem, then
from Lemma~\ref{lem:haspsd}
$(G,\q^*)$ has a non-zero equilibrium stress $\omega$ with a non-zero PSD stress matrix $\Omega$.  
Note that, since 
${\q}$ is near to $\p$, its rung length cannot be zero, and
so our PSD stress must involve more than one edge.

We next consider the sign pattern of the stress with the PSD stress matrix $\Omega$.  Recall each triangle is not in a line, and thus the vertices of each triangle must span a plane.  Recall that each vertex is of degree $3$, so when the edges adjacent to a vertex are such that no $2$ are collinear, any non-zero stress in one edge is non-zero in the other $2$.
The stresses $\omega_{ij}$ cannot be $0$ on all edges $ij$ of either triangle of $\q^*$, since if that were to happen on one triangle, then all the rung stresses would be $0$, and then all the other stresses on the other triangle would also be zero.    So at least one of the rungs must have a non-zero stress.  

If only one rung has a non-zero stress, one cannot have equilibrium at the vertices of that rung, since the other vertices of the triangles could not be in equilibrium.  
So this cannot happen.

Suppose that there are exactly two rungs with a non-zero stress.  Say the two rungs are $A,a$ and $B,b$.  Then we have a cycle of four edges $(A,a),(a,b),(b,B),(B,A)$, which are the only edges with non-zero stress.
If the signs of that cycle of four edges are $+,-,+.-$, the stress matrix $\Omega$ cannot be PSD. Thus one sign is negative and the other three are positive. 
Thus $\q^*$ has a stretched cycle. $\p$ is nearby, so it too must
have a stretched cycle. 
Thus from Theorem \ref{thm:stretched}
$\p$ itself is super stable and
$D(\p)>0$ and we are done.

In the remaining case, 
we know that all the rungs have a non-zero stress.
In this case, all the vertices of $G$ are adjacent to an edge with a non-zero stress.
Equilibrium at each vertex of that triangle implies that each of the rung edges lie in that plane.
So all the vertices lie in that plane.
So there is no configuration with that stress that spans a higher dimension than $2$.  
That means that the rank of $\Omega$ is $6-3=3$, the maximum.
We already know that the stress matrix 
is PSD.
Since each of the triangles of $\q$ does not lie in a line, the same is true for $\q^*$, so the edges of $\q^*$
are in more than $2$ directions and so are not 
on a conic at infinity.
 Thus $(G,\q^*)$ is super stable
in $\RR^2$.   
So we know that $\q^*$
is a super stable Desargures configuration, where the three rung lines intersect at a point (possibly at infinity).

We could now show directly that the orthogonal projection of $\q^*$ into $\RR^1$ is, in fact, super stable, but we would like to also show that there is a configuration $\hat{\p}$ within $\epsilon$ of $\p$ in the line that has $D(\hat{\p})>0$, which by Theorem \ref{discriminant:thm} shows that $\hat{\p}$ is super stable as well. In order calculate the discriminant, we 
will want an orchard ladder that projects into $\RR^1$. To do that we will set up a limiting process over a sequence of
$\q^*(i)$. 

Recall that in the original optimization, all the lengths of the edges are fixed except one rung. 
And we showed that there is a $\delta > 0$ such that when the starting configuration $\q$ is within $\delta$ of $\p$, there
is a critical (max or min rung length) configuration $\q^*$ 
that stays in the $\epsilon$ neighborhood of $\p$. 
In the optimization, 
the shape of two triangles in $\q$, corresponding to $A,B,C$ and $a,b,c$, can be chosen to be as close to being in a line as we like.  

Let us set up a sequence of $\q(i)$ near to $\p$,
where the two triangles  converge to  line segments, and the quadrilateral not involving the optimized rung has
fixed lengths that cannot be flattened into the line.

The optimization now gives us a sequence $\q^*(i)$, with 
a convergent subsequence with limit $\q^{**}$
and $||\p-\q^{**}||\le \epsilon$. 
It has a two dimensional span.
Its three rung lines intersect at a point (possibly at infinity). 
Its two triangles both lie in lines.
Thus, $(G,\q^{**})$ is a ladder (orchard or step).

The projection of $\q^{**}$ into $\RR^1$ is such that $D(\q^{**}) \ge 0$ from Theorem~\ref{thm:newRatio}.
Applying Corollary \ref{end-vary} we can then wiggle 
this projected framework to get the desired configuration
$\hat{\p}$ with positive discriminant, and thus, also super stable.
It is within $\epsilon$ of $\p$ (for a slightly increased
$\epsilon$).
\qedsymbol 
\end{proof} 

As a final wrap-up, putting all the results together from above, we describe the universal rigidity of the ladder graph in $\RR^1$ completely.  Note that when the pole point $x$ on the line through one pole is at $\infty$, by Equation (\ref{eq: ratio}) that means 
\begin{equation}\label{eq:infty}
\frac{C-A}{B-A}=\frac{c-a}{b-a}. 
\end{equation}

\begin{theorem}\label{cor:wrap-up} For a ladder framework $G(\p)$ in $\RR^1$ with all edges non-zero,
\begin{enumerate}
    \item If $D(\p)>0$, then $(G,\p)$ is super stable. 
    \item If $D(\p)<0$, then $(G,\p)$ is not universally rigid. 
    \item If $D(\p)=0$, and $\mathcal{A} \ne 0$ then $(G,\p)$ is universally rigid, but not super stable.
    \item If $D(\p)=0$, and $\mathcal{A} = 0$, then $(G,\p)$ is not universally rigid and flexes as a step ladder.
\end{enumerate}
\end{theorem}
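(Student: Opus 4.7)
The plan is to observe that the four parts of the theorem fall out essentially for free from results already proved; the statement is really a bookkeeping wrap-up, so I would dispatch the cases in a convenient order rather than the stated one.

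First, case (1) is immediate: $D(\p) > 0$ is precisely the hypothesis of Theorem \ref{discriminant:thm}, whose conclusion is super stability of $(G,\p)$.

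Next, the two cases with $D(\p) = 0$ I would both read off Lemma \ref{lemma:projection}, which for $D = 0$ always produces a ladder lift $(G,\q)$ in $\RR^2$, asserts that $(G,\p)$ is not super stable, and characterizes universal rigidity of $(G,\p)$ by requiring that the rungs or poles of $(G,\q)$ not be parallel. For case (3), with $\mathcal{A} \neq 0$, the quadratic (\ref{eq: ratio}) in $x$ is genuinely degree two and its double root is finite; I would identify this root with the pole point of the lift, which is therefore finite, so the two poles of $(G,\q)$ meet at a finite point and are not parallel. Lemma \ref{lemma:projection}(3) then delivers universal rigidity and Lemma \ref{lemma:projection}(2) non-super-stability. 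For case (4), with $\mathcal{A} = 0$, the identity $D = \mathcal{B}^2 - 4\mathcal{A}\mathcal{C}$ forces $\mathcal{B} = 0$, and the argument in the proof of Theorem \ref{thm:newRatio} identifies this as exactly the step ladder case. Because a step ladder flexes continuously in the plane, flattening into $(G,\p)$, the flex supplies a non-congruent realization of $(G,\p)$ in $\RR^2$ with the same edge lengths, so $(G,\p)$ is not universally rigid.

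Finally, case (2), the only part that is not immediate, I would handle as the contrapositive of Theorem \ref{thm:ladder-nonUR}. That theorem says that if $(G,\p)$ is universally rigid, then for every $\epsilon > 0$ there is a framework $(G,\hat\p)$ with $\|\p - \hat\p\| < \epsilon$ and $D(\hat\p) > 0$. Since $D$ is a polynomial in the six coordinates, it is continuous, and when $D(\p) < 0$ there is a neighborhood of $\p$ on which $D$ stays strictly negative; this precludes any nearby framework with $D > 0$, contradicting universal rigidity. The real substance of the argument is already packaged inside Theorem \ref{thm:ladder-nonUR}, so the wrap-up itself has essentially no conceptual obstacle; the subtlest points are the identification in case (3) of $\mathcal{A} \neq 0$ with the lift's poles being non-parallel, and in case (4) of $\mathcal{A} = 0, D = 0$ with the step-ladder lift of Theorem \ref{thm:newRatio}, both of which are direct consequences of the formulas (\ref{eqn:dis-1})--(\ref{eqn:dis-3}).
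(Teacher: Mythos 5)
Your proposal is correct and follows essentially the same route as the paper: case (1) from Theorem \ref{discriminant:thm}, case (2) as the contrapositive of Theorem \ref{thm:ladder-nonUR} via continuity (the paper phrases it as approximation by frameworks with $D>0$ forcing $D(\p)\ge 0$), case (3) by noting $\mathcal{A}\neq 0$ makes the pole point finite so the lift's poles are not parallel and then invoking Lemma \ref{lemma:projection}, and case (4) from $\mathcal{A}=0$, $D=0$ forcing $\mathcal{B}=0$ and hence a step-ladder lift as in Theorem \ref{thm:newRatio}. Your added details (identifying the finite double root with the pole point, and spelling out that the step-ladder flex gives a non-congruent realization) are consistent elaborations of the paper's argument rather than a different approach.
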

\begin{proof} 
    \begin{enumerate}
        \item This is Theorem \ref{discriminant:thm}
        \item If $(G,\p)$ were universally rigid, then by Theorem \ref{thm:ladder-nonUR} and Theorem~\ref{thm:stretched}
        $(G,\p)$ 
        can be approximated by frameworks $(G,\hat{\p})$ where $D(\q)> 0$.  Then we would have $D(\p) \ge 0$, a contradiction. 
        \item When $D(\p)=0$ and $\mathcal{A} \ne 0$, then the pole point $x$ is finite, as in Figure \ref{Tensegrity-ladders.fig}, and thus
        the poles of any lift $(G,\q)$ in $\RR^2$ 
        will not  be parallel.
        The result then follows from Lemma \ref{lem:D=0}.
        
        \item If $D(p)=0$ and 
        $\mathcal{A} = 0$, this implies that
        $\mathcal{B}=0$. This means that anly lift is a step ladder
        as in the proof of Theorem~\ref{thm:newRatio}.
    \end{enumerate}
    \qedsymbol
\end{proof}

\section{How it flexes}

We know that when the discriminant D is negative, that it flexes at least into $\RR^3$.  But it does so in an interesting way.  But first an observation:

\begin{theorem}\label{thm:planar-ladders} If a ladder in the plane is not contained in a line, and is not an orchard ladder or a step ladder, then it is rigid in the plane, and flexible up into $\RR^3$ and then back down again into the plane.  
\end{theorem}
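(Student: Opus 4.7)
I would prove the two assertions separately, using a single parameter count and rank computation; the new hypothesis enters only through a rank condition at the given configuration $p_0$.

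\textbf{Rigidity in the plane.} Fix pole one on the $x$-axis and parameterise pole two in $\RR^2$ by its base-point $a=(a_x,a_y)$ and direction angle $\theta$, so that $\vec v=(\cos\theta,\sin\theta)$, $b=a+s_b\vec v$, $c=a+s_c\vec v$ with the edge-length-determined spacings $s_b,s_c$. The three rung equations $|Aa|^2, |Bb|^2, |Cc|^2 = \mathrm{const}$ form a $3\times 3$ polynomial system in $(a_x,a_y,\theta)$. I would check that at $p_0$ the Jacobian has rank $3$: a row dependency produces scalars $\lambda_i$ satisfying $\lambda_1(a-A)+\lambda_2(b-B)+\lambda_3(c-C)=0$ together with a compatibility equation involving $\vec v$, and writing these out in coordinates exactly encodes the projective statement that the three rung lines meet at a common point (finite or at infinity) with the appropriate pole-direction condition, i.e., that the ladder is an orchard ladder or a step ladder, both excluded. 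Hence the Jacobian is invertible, the real solution set is discrete near $p_0$, and the planar ladder is continuously rigid.

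\textbf{Flexing into $\RR^3$.} In $\RR^3$ pole two has five parameters $(a\in\RR^3, \vec v\in S^2)$, and the only rigid motion left after fixing pole one is the axial rotation about the $x$-axis, giving a four-dimensional quotient. The same three rung equations together with $|\vec v|=1$ cut out a variety of expected dimension $1$. At $p_0$ every edge vector has vanishing $z$-component, so the linearised equations contain no $\dot a_z$ or $\dot v_3$ terms; the remaining $4\times 4$ block of the Jacobian is the same one whose rank was just analysed, and by the non-orchard, non-step hypothesis it is invertible. Consequently the Zariski tangent at $p_0$ in the five-parameter space is exactly $\mathrm{span}\{\partial/\partial a_z,\partial/\partial v_3\}$, and the variety is smooth of dimension two there. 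The axial-rotation direction at $p_0$ is $(a_y^0,v_2^0)$ in these two coordinates; taking the gauge $a_z=0$ kills it, and the one-dimensional quotient flex direction has $\dot v_3\neq 0$, hence crosses the planar stratum $\{v_3=0\}$ transversally.

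\textbf{Returning to the plane.} The configuration variety is compact (each pole-two vertex lies on a fixed sphere around the corresponding pole-one vertex) and smooth of dimension one at $p_0$, so the connected component of $p_0$ in the quotient is diffeomorphic to $S^1$. On this loop the continuous function $v_3$ vanishes at $p_0$ with non-zero derivative, so it takes both signs; the intermediate value theorem produces a second zero $p_1\neq p_0$, at which pole two again lies in the $xy$-plane. The arc of the loop from $p_0$ to $p_1$ consists of configurations with $v_3\neq 0$, which is the promised flex up into $\RR^3$ and back down into the plane.

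\textbf{Main obstacle.} The heart of the argument is the rank computation at $p_0$: translating a Jacobian rank drop into the geometric coincidence that the three rung lines are concurrent. This is Desargues' theorem applied in the degenerate setting where each triangle collapses to a rod, and the projective cases (rung point at infinity versus finite, with or without parallel poles) need to be matched against the orchard-versus-step dichotomy. This case analysis is the only place the hypothesis is used and is where I expect the proof to be most technical.
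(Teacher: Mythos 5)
Your planar-rigidity argument is sound and takes a genuinely different route from the paper. Because the triangle inequality is tight on each collapsed triangle, both poles stay straight in every configuration, so the problem really does reduce to two rigid rods in the plane joined by three bars, and your $3\times 3$ Jacobian drops rank exactly when the three rung lines are projectively concurrent (the classical two-bodies/three-bars criterion); the pole directions do not enter the rank condition at all, only the bookkeeping step of checking that, given non-zero edge lengths and a two-dimensional span, ``rung lines concurrent'' is equivalent to ``orchard or step''. So the step you flag as the main obstacle is easier than you fear. The paper argues this half differently: the full bar framework is never infinitesimally rigid in the plane (each collapsed triangle carries first-order flexes), so it invokes prestress stability rather than a Jacobian computation; your reduction sidesteps that issue cleanly. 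Your local argument that the ladder flexes into $\RR^3$ is also essentially correct, except that the planar configurations are not $\{v_3=0\}$: $v_3$ is neither invariant under the residual axial rotation nor equivalent to coplanarity once $a_z\neq 0$. Replace it by the invariant coplanarity function $h(a,v)=(a-A)\cdot(v\times e_1)$, with $e_1$ along the fixed pole; at $p_0$ its derivative is non-zero in every direction transverse to the rotation orbit, which repairs the transversality claim and gives the flex upward.

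The genuine gap is in ``and then back down again into the plane''. From smoothness of the solution set at the single point $p_0$ you conclude that the connected component of $p_0$ in the quotient is diffeomorphic to $S^1$; that does not follow. A connected component of this real algebraic set can contain singular configurations away from $p_0$ (points where the relevant Jacobian drops rank again), may branch or have pieces of other dimensions, and compactness alone does not make it a $1$-manifold, so the intermediate-value argument on a loop is unsupported. What your local analysis actually yields is that $h$ changes sign near $p_0$, i.e.\ non-planar configurations exist on both sides; it does not show the flex ever returns to $h=0$ at a configuration outside the orbit of $p_0$. The paper closes exactly this hole by an explicit global mechanism: since no three of the five member lines are concurrent, duality makes them tangent to a unique conic, and the tangent lines of a conic are the planar limit of the rulings of a ruled quadric in $\RR^3$; the classical one-parameter flex of that quadric preserves lengths along the rulings, carries the ladder through genuinely spatial configurations, and terminates in another planar configuration. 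To finish your version you would need either this explicit family or a genuinely global analysis of the component (e.g.\ showing the rank condition persists along it), not smoothness at $p_0$ alone.
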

\begin{figure}[ht]
\centering 
\includegraphics[scale=0.4]{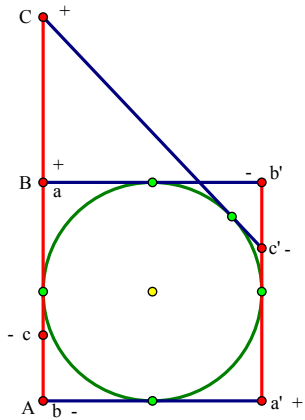}
\captionsetup{labelsep=colon,margin=1.3cm}
\caption{This shows a ladder, not an orchard ladder, that is rigid in the plane, but flexes in $\RR^3$ and then back down into the line. Notice that some of the vertices overlap, but no adjacent vertices overlap. Notice also that there is a $+$ or $-$ next to each of the vertices.  The map from hyperboloid in $\RR^3$ to $\RR^2$ is two-to-one except for points on the conic, and the sign indicates whether the point is above or below the limiting plane as it converges to the conic, in this case a circle.   The green points are the intersection of each line through each edge of the framework with that circle in the center of the hyperboloid.   When the whole framework flexes into $\RR^1$ the green points all fold into a single point.}
\label{Flexible-ladders.fig}
\end{figure}
\begin{proof} If the three rung lines intersect at a point, then the ladder is universally rigid as an orchard ladder, or it is a step ladder and flexes in the plane. Otherwise all the five lines in the plane are such that no three meet at a point since all the member lengths are non-zero.  Thus by duality in the projective plane, the five lines are tangent to a unique conic.  But then the lines tangent to that conic are the limiting flex of configurations of some ruled hyperbolic surface in $\RR^3$, which flexes as indicated in Figure \ref{hyperboloid.fig}.  Since the rung lines do not meet at a point (including a point at infinity), it is possible to show that the framework, in the plane, is "prestress stable" which implies it is locally rigid. (See \cite{book} for definitions and results.)\qedsymbol
\end{proof}

It is easy to see from the definition of the flex in page 99 in \cite{book} that the flex of a doubly ruled hyperboloid will flex down to a line if and only if two of the eigenvalues of the symmetric matrix, that describes the conic at infinity, are equal.  When a ladder is in $\RR^3$ and the lines through the poles are disjoint, then the lines through the rungs determine a ruled hyperboloid surface.  Then that surface, along with its embedded ladder, flexes into a plane, and the resulting rulings are all tangent to a circle.  You can see the hyperboloid in Figure \ref{hyperboloid.fig} below. 

\begin{figure}[ht]
\centering 
\includegraphics[scale=0.3]{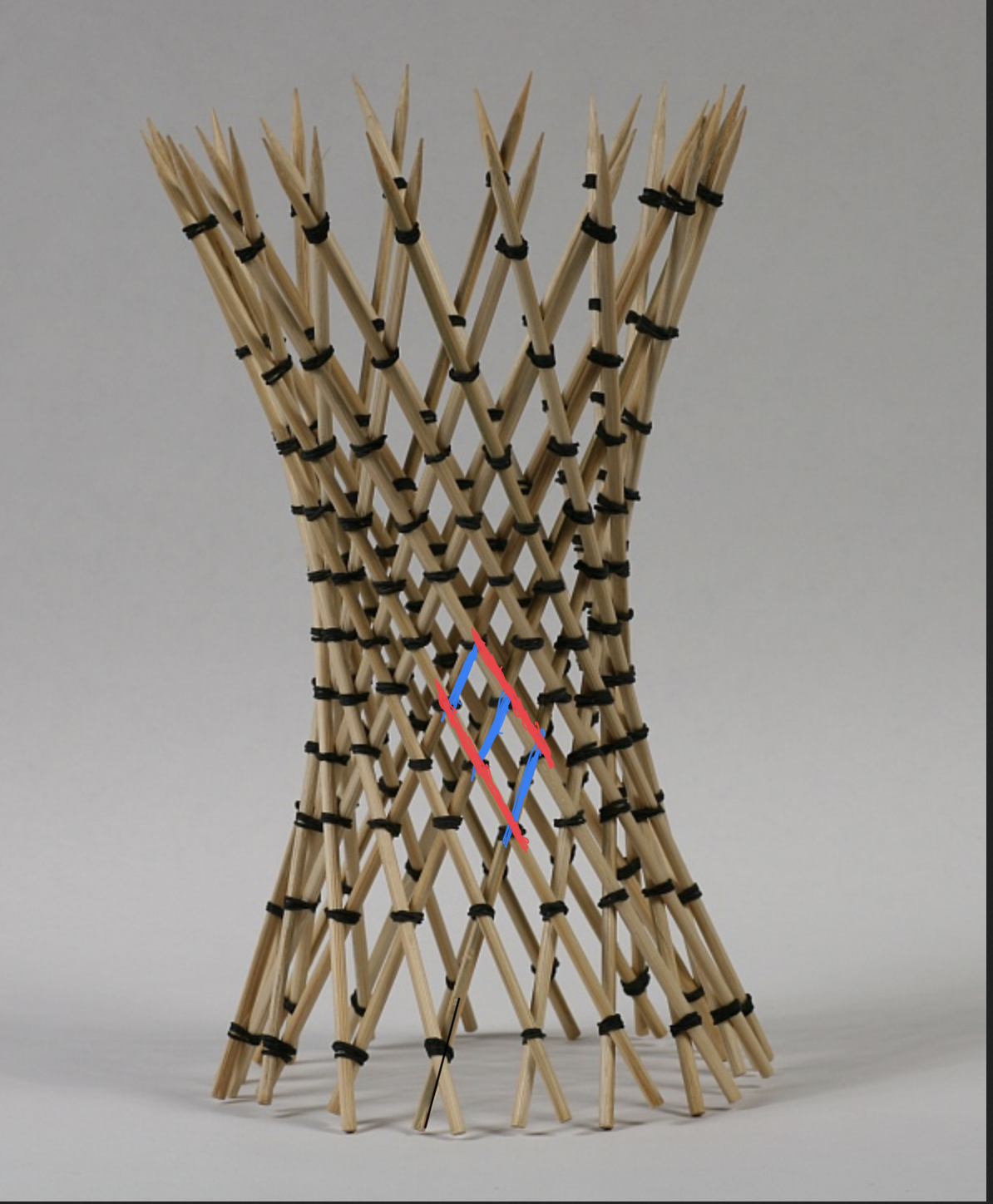}
\captionsetup{labelsep=colon,margin=1.3cm}
\caption{This shows a hyperboloid with circular symmetry that flexes into the plane around a circle, preserving lengths along the rulings. A typical ladder is shown in red and blue.}
\label{hyperboloid.fig}
\end{figure}
A doubly ruled hyperbolic surface in $\RR^3$ has its rulings correspond to a three-by-three symmetric matrix.  If the eigenvalues of that matrix are all different (and necessarily not all the same sign), then the hyperboloid ruling lines flex, and project orthogonally, into $\RR^2$ two ways.  One way is such that the ruling lines are all tangent to an ellipse, and the other way is such that the ruling lines are all tangent to an hyperbola. 
When two of the eigenvalues are equal, we have the situation of Figure \ref{hyperboloid.fig}.
When there are only two non-zero eigenvalues, necessarily of opposite sign, then the conic at infinity is two lines, and the hyperboloid is a parabolic hyperboloid. The ruling lines flex and project onto the lines tangent to a parabola for both ends of the flex.


\section{Addendum}

Here we show that the eleven cable-strut patterns of the two projectivity classes of super stable Desargues graphs are all the possible sign patterns and projectivity classes.

\begin{lemma}\label{sign:thm} If $(G,\p)$ is a super stable tensegrity framework and $p_0$ is a vertex with exactly three neighbors $p_1, p_2, p_3$, no three in a line, then the configuration must be one of the types in Figure \ref{local-stress.fig}.  In case (a), the each pair of successive cables must be less than $180$ degrees apart.  In cases (b) and (c), the direction of  $p_0,p_3$ must be between the  $p_0,p_1$ and $p_0,p_2$ directions.  In case (b), $p_3$ must not be in the triangle formed by $p_0,p_1,p_2$.  In case (c), $p_3$ must  be in the triangle formed by $p_0,p_1,p_2$.
\end{lemma}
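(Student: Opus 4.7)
The plan is to read off the sign pattern of the local equilibrium stress at $p_0$ and then use the PSD property of the stress matrix to fix the overall sign. Let $\omega$ be a PSD equilibrium stress certifying super stability of $(G,\p)$, and set $u_i := p_i - p_0$ for $i=1,2,3$. The equilibrium at $p_0$ reads
\[
\omega_{01} u_1 + \omega_{02} u_2 + \omega_{03} u_3 = 0.
\]
Since no three of $p_0, p_1, p_2, p_3$ are collinear, the vectors $u_1,u_2,u_3$ in $\RR^2$ are pairwise linearly independent; the kernel of the $2\times 3$ matrix with columns $u_i$ is therefore one-dimensional, so the triple $(\omega_{01},\omega_{02},\omega_{03})$ is determined up to a nonzero scalar, and its sign pattern is pinned down up to a global flip. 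The PSD hypothesis, applied to the standard basis vector $e_0$, supplies the extra inequality $\Omega_{00} = \omega_{01}+\omega_{02}+\omega_{03} \ge 0$, which will fix the global sign.

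I would then split on the location of $p_0$ relative to the triangle $p_1 p_2 p_3$. If $p_0$ lies in the interior of $\triangle p_1 p_2 p_3$ (case (a)), the vectors $u_i$ positively span $\RR^2$, which forces $\omega_{01},\omega_{02},\omega_{03}$ to share a sign. The diagonal inequality rules out the all-negative option, so all three edges are cables, and the angular condition ``each pair of successive cables less than $180^\circ$ apart'' is a direct restatement of $p_0 \in \operatorname{int}\triangle p_1 p_2 p_3$.

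If instead $p_0$ lies outside $\triangle p_1 p_2 p_3$, then exactly one of $u_1,u_2,u_3$ lies in the open cone spanned by the other two. After relabeling assume $u_3 = \alpha u_1 + \beta u_2$ with $\alpha,\beta>0$; this is exactly the statement that the direction $p_0\to p_3$ lies between the directions $p_0\to p_1$ and $p_0\to p_2$. Substituting back into the equilibrium and using linear independence of $u_1,u_2$ yields $\omega_{01}=-\alpha\omega_{03}$ and $\omega_{02}=-\beta\omega_{03}$, so $\omega_{03}$ has opposite sign to the other two. The diagonal inequality now becomes $(1-\alpha-\beta)\,\omega_{03}\ge 0$. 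Since $p_3 = \alpha p_1 + \beta p_2 + (1-\alpha-\beta) p_0$, the sign of $1-\alpha-\beta$ is positive precisely when $p_3 \in \operatorname{int}\triangle p_0 p_1 p_2$. Thus case (c) (with $p_3$ inside) forces $\omega_{03}>0$, so $p_0 p_3$ is a cable and $p_0 p_1, p_0 p_2$ are struts, while case (b) (with $p_3$ outside) reverses all three signs.

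The main obstacle I expect is ruling out the degenerate possibility that $\omega_{01}=\omega_{02}=\omega_{03}=0$, in which case the local sign pattern is undefined and nothing forces the configuration into one of the three types. I would handle this either by appealing to the maximal-rank condition on the super-stable PSD stress matrix (a zero row at a degree-$3$ vertex would force the corresponding column to vanish and drop the rank below what super stability permits) or, failing that, by invoking the iterated PSD stress sequence of \cite{iterative} to manufacture a stress which is nonzero on every edge needed for rigidity, and then running the sign analysis above on that iterated stress.
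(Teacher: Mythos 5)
Your argument is correct, but it takes a genuinely different route from the paper's proof. The paper reads the ``direction between'' conditions off the signs of the equilibrium stress, and then separates cases (b) and (c) geometrically: rotate $p_0$ out of the plane about the line $p_1p_2$, keeping $\|p_0-p_1\|$ and $\|p_0-p_2\|$ fixed, and note that $\|p_0-p_3\|$ strictly increases when $p_3$ lies inside the triangle $p_0p_1p_2$ and strictly decreases when it lies outside; whichever change is compatible with the cable/strut designation produces an admissible non-congruent configuration, contradicting the universal rigidity implied by super stability (and the all-strut pattern is excluded by moving $p_0$ arbitrarily far away). You instead stay entirely inside the PSD certificate: the kernel relation $u_3=\alpha u_1+\beta u_2$, the diagonal inequality $\Omega_{00}=(1-\alpha-\beta)\,\omega_{03}\ge 0$, and the barycentric identity $p_3=\alpha p_1+\beta p_2+(1-\alpha-\beta)p_0$ tie the sign of $\omega_{03}$ to whether $p_3$ is inside $\triangle p_0p_1p_2$, which is exactly the (b)/(c) dichotomy, and the same inequality kills the all-negative (all-strut) pattern. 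Your version is more algebraic and in a sense sharper, since it uses only positive semi-definiteness of one proper stress rather than constructing flexes; the paper's version buys an explicit motion witnessing the failure of rigidity in the forbidden configurations and avoids any discussion of degenerate stresses.

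Two details to tighten. First, you work in $\RR^2$; in general one should first note (as the paper does in its opening sentence) that equilibrium at $p_0$ with not-all-zero coefficients, together with $p_1,p_2,p_3$ not collinear, forces $\omega_{01}+\omega_{02}+\omega_{03}\neq 0$ and hence $p_0\in\aff(p_1,p_2,p_3)$, so the local picture is planar. Second, in the degenerate case $\omega_{01}=\omega_{02}=\omega_{03}=0$, a zero row does not by itself ``drop the rank''; the correct statement is that it puts $e_0$ in $\kernel\Omega$, and since super stability makes $\kernel\Omega$ exactly the span of $\vecone$ and the coordinate vectors of $\p$ (the configuration affinely spans because $p_1,p_2,p_3$ are not collinear), $e_0\in\kernel\Omega$ would force every vertex other than $p_0$ onto a common line, contradicting the non-collinearity of $p_1,p_2,p_3$. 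With that fix, your fallback via iterated PSD stress sequences is unnecessary: super stability already supplies a single maximal-rank PSD proper stress.
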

\begin{figure}[ht]
\centering 
\includegraphics[scale=0.5]{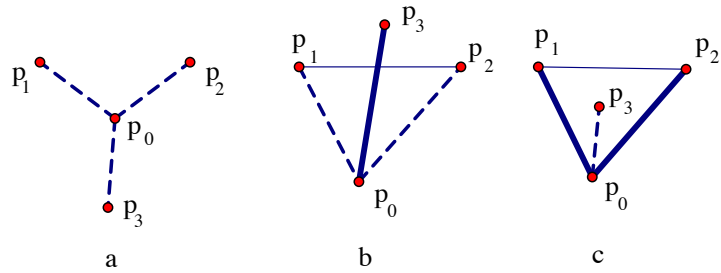}
\captionsetup{labelsep=colon,margin=1.3cm}
\caption{This shows the three cases of super-stable configurations with three edges at a vertex $p_0$ with the cable/strut designation.}
\label{local-stress.fig}
\end{figure}

\begin{proof} Clearly all four vertices must be in a plane, by the equilibrium condition.   When all the edges are cables in case (a), the $180$ degree condition is clear since the stresses are all the same sign.

When two of the edges are cables and one a strut are as in as in case (b) the direction is clear from the signs of the stresses.  If $p_3$ is in the $p_0,p_1,p_2$ triangle, then when $p_0$ is rotated up out of the plane, fixing  $p_1,p_2,p_3, ||p_1-p_0||$, and $||p_2-p_0||$, then $||p_3-p_0||$ increases contradicting the super stability of $(G,\p)$.  

When two of the edges are cables and one a strut are as in as in case (b) the direction is clear from the signs of the stresses.  If $p_3$ is not in the $p_0,p_1,p_2$ triangle, then when $p_0$ is rotated up out of the plane, fixing  $p_1,p_2,p_3, ||p_1-p_0||$, and $||p_2-p_0||$, then $||p_3-p_0||$ decreases contradicting the super stability of $(G,\p)$. 

When all the edges are struts, $(G,\p)$ is clearly not super stable since $p_0$ can be sent to Timbuktu.
\qedsymbol
\end{proof}

In Figure \ref{slicers.fig} there are pictures of two representations of the Desargues as tensegrities in the plane that are not projectively equivalent, along with lines that cut through some of the cable and strut segments.  When a projectivity sends a line to infinity, and that line intersects a cable or strut, in the image tensegrity, the roll of it being a cable or strut is reversed.  When the line does not intersect the cable or strut, it does not change.  Up to symmetries of the abstract graph, the lines indicated are all the possibilities.  

\begin{figure}[ht]
\centering 
\includegraphics[scale=0.4]{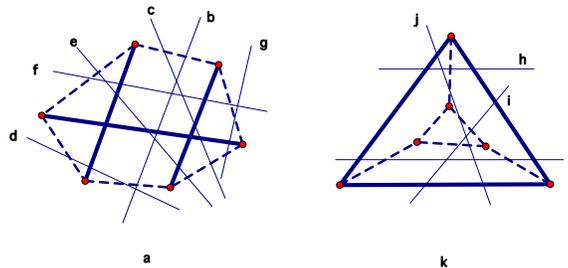}
\captionsetup{labelsep=colon,margin=1.3cm}
\caption{Here are two configurations of the Desargues graph that are projectively inequivalent. Also included are lines that intersect the edges of each framework such that when those lines are sent to infinity by a projective map of the plane, then the configuration becomes the corresponding configuration in Figure \ref{Tensegrity.fig}.}
\label{slicers.fig}
\end{figure}

 Using the geometric conditions of Lemma \ref{sign:thm} one can find  the geometric shape of the tensegrities in Figure \ref{Tensegrity.fig}. 
 
 \begin{theorem} The sign pattern and configurations of Figure \ref{Tensegrity.fig} are all the possibilities for super stable tensegrities for the Desargues graph, no three vertices in a line.
 \end{theorem}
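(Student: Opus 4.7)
The plan is to combine the local sign restrictions from Lemma~\ref{sign:thm} with a projective equivalence argument. At each of the six degree-three vertices of the Desargues graph, Lemma~\ref{sign:thm} forces the three incident edges to realize one of the three patterns (a), (b), or (c) shown in Figure~\ref{local-stress.fig}, and the all-strut local pattern is explicitly ruled out. Since each edge has two endpoints and its cable/strut designation must be consistent with the local pattern at both, I would first enumerate, up to the combinatorial symmetries of the Desargues graph, the global sign assignments on the nine edges that are locally consistent at all six vertices simultaneously.

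Second, I would use two facts already established in this paper. A super-stable Desargues framework forces the three rung lines to be concurrent, and any non-singular projective image of a super-stable framework is again super-stable. Moreover, such a projective map carries cables to cables and struts to struts except on the edges crossed by the line that is sent to infinity, on which the designation is reversed. This means the set of valid sign patterns is closed under the action of projective transformations with the indicated flip rule, so the classification reduces to choosing one representative per projective orbit and then tracing out the full orbit using projective maps.

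The key step is then to start from the two base configurations drawn in Figure~\ref{slicers.fig}, verify directly via Lemma~\ref{sign:thm} that each is super-stable with the indicated tensegrity, and then apply the slicer lines shown in that figure to generate the full orbit. Up to graph symmetry, the lines in Figure~\ref{slicers.fig} enumerate every possible incidence pattern of a generic line with the Desargues edges, and the resulting images are precisely the seven configurations (a)--(g) and the four configurations (h)--(k) of Figure~\ref{Tensegrity.fig}. I would confirm by inspection that each of these eleven patterns satisfies the Lemma~\ref{sign:thm} conditions at every vertex.

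The hard part will be showing completeness: that no globally consistent sign pattern outside the eleven listed can arise, and that no further projective orbits exist. I expect this to be handled by arguing that any super-stable Desargues tensegrity is projectively equivalent to one of the two base configurations, with the invariant separating the two orbits being an incidence-combinatorial quantity such as whether the rung point lies inside or outside the hexagon determined by the other six vertices. Together with the local restrictions of Lemma~\ref{sign:thm}, this will rule out all further patterns and complete the classification.
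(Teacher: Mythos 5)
There is a genuine gap, and it sits exactly where you flag it: the completeness step. Your plan reduces the theorem to the claim that every super-stable Desargues tensegrity is projectively equivalent to one of the two base configurations of Figure~\ref{slicers.fig}, to be settled by an unspecified invariant (rung point inside or outside the hexagon). But that two-orbit claim \emph{is} the content of the theorem; deferring it to ``I expect this to be handled by arguing\dots'' leaves the proof circular, and enumerating how generic lines meet the two base pictures only shows the eleven patterns are \emph{attainable}, not that no others exist. The paper avoids this by a direct case analysis on the sign pattern of one triangle: if some triangle is all struts, Lemma~\ref{sign:thm} forces pattern $g$ or $h$ of Figure~\ref{Tensegrity.fig}; if some triangle is cable--cable--strut, a projective map sending a line through the two cables to infinity reduces to the all-strut case; otherwise a triangle is all cables, and the finitely many rung sign patterns are checked one by one (Figure~\ref{cable-triangle.fig}).

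You are also missing the global tool that makes the last step work and that your purely local consistency check cannot supply: in any universally rigid (hence super stable) tensegrity the cable subgraph must connect all the vertices, since otherwise cable-connected components could be moved arbitrarily far apart while respecting all constraints. The local conditions of Lemma~\ref{sign:thm} do not rule out, for instance, two all-cable triangles joined by three strut rungs --- every vertex there looks locally admissible --- nor the right-hand column of Figure~\ref{cable-triangle.fig}; these are eliminated precisely by cable-connectivity. So to repair your plan you would need (i) to add the cable-connectivity argument to prune the locally consistent sign patterns, and (ii) to replace the orbit-counting expectation with an actual elimination argument (the paper's triangle case analysis, or an equivalent), rather than assuming the two-class structure you are trying to prove.
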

 \begin{proof} One can see that if there is a super stable tensegrity for the Desargues configuration, where one of the triangles is all struts, then either configuration $g$ or $h$ of Figure \ref{Tensegrity.fig} is the sign pattern. 
 
 It is also easy to see that if any tensegrity is super stable (or universally rigid for that matter), then the cable edges of $G$ connect all the vertices of $G$.  If not, seperate components can be positioned arbitrarily far apart, satisfying the cable/strut conditions. 
 
 If one of the triangles in the Desargues graph $G$ is cable-cable-strut, then sending a line through the two cables to infinity by a projective transformation, we can create another configuration with a triangle of all struts.
 
 So we are left with the case that there is a triangle consisting of all cables.  Not all the rung edges can be struts, since that would create a tensegrity where the cable graph is not connected.  Figure \ref{cable-triangle.fig} shows the three cases when the three rung edges are all cables (top row), two cables and a strut (second row), and one cable and two struts (bottom row).  The left column corresponds to examples in Figure \ref{Tensegrity.fig}, while the two examples in the right column are not PSD, since their cable graphs are not connected.  The right column arises from the positioning of the three vertices of the other triangle and from the conditions of  Lemma \ref{sign:thm}.\qedsymbol
\end{proof}
\begin{figure}[ht] 
\centering 
\includegraphics[scale=0.3]{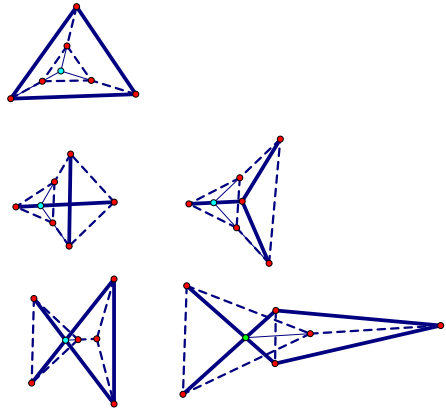}
\captionsetup{labelsep=colon,margin=1.3cm}
\caption{When one of the triangles in the graph $G$ is all cables, this is a list of possible sign patterns that satisfy the conditions of Lemma \ref{sign:thm}.  Only the left column of tensegrities is such that they have a PSD stress matrix, and they are examples of the tensegrities $k, h, l$ from Figure \ref{Tensegrity.fig} respectively.}
\label{cable-triangle.fig}
\end{figure}

If one of the triangles in the Desargues graph $G$ is cable-cable-strut, then sending a line through the two cables to infinity by a projective transformation, we can create another configuration with a triangle of all struts.

\bibliographystyle{abbrvnat}
\bibliography{icms.bib} 

\end{document}